\newcommand{\PBB}{\mathcal B  \mathcal P } \newcommand{\PC}{\mathcal P \mathcal C}
\newcommand{\bea}{\begin{eqnarray}} \newcommand{\eea}{\end{eqnarray}} \newcommand{\beas}{\begin{eqnarray*}} \newcommand{\eeas}{\end{eqnarray*}} \newcommand{\be }{\begin{equation}}
\newcommand{\ee }{\end{equation}}
  \newcommand{\V}{\mathcal V}
\newcommand{\co}{\operatorname{co}}
\newcommand{\trace}{\operatorname{trace}} \newcommand{\sign}{\operatorname{sgn}}  \newcommand{\Ave}{\operatorname{Ave}}
\newcommand{\rank}{\operatorname{rank}}
\newcommand{\tr}{\operatorname{tr}}
\newcommand{\A}{\mathcal A}
\newcommand{\U}{\mathcal U} \newcommand{\W}{\mathcal W} \newcommand{\B}{\mathcal B}   
 \newcommand {\R}{\mathbb R}  
\newcommand{\qed}{\hfill\raisebox{1.2mm}{\fbox{}}}    \newtheorem{Theorem}{Theorem}
\newtheorem{Definition}{Definition} \newtheorem{Lemma}{Lemma} \newtheorem{Proposition}{Proposition} \newtheorem{Corollary}{Corollary} 
 \newtheorem{Example}{Example} \newtheorem{Remark}{Remark} \newtheorem{Problem}{Problem} 
\newtheorem{Question}{Question}%%%%%%% define FIG
\newcommand{\sname}{} \newcommand{\slabel}[1]{\debug{\fbox{\tiny \sname #1}}\label{\sname #1}}
\newcommand{\debug}[1]{}              % final version
\newcommand{\FB}{\begin{figure}[t]\centering} \newcommand{\FE}[2]{\caption{#2 \debug{\fbox{\sname #1}}} \slabel{#1} \end{figure}} \newcommand{\tB}{\begin{table}[hbtp]\centering}
\newcommand{\tE}[2]{\caption{#2 \debug{\fbox{\sname #1}}}\slabel{#1} \end{table}} \newcommand{\FIG}[3]{\FB\input{#1}\FE{#2}{#3}}
\title{ \LARGE \bf  Switching Between   Linear  Consensus~Protocols: \\A Variational~Approach }
\author{Orel Ron$^{1}$  and Michael Margaliot$^{2}$ and Michael S. Branicky$^3$% <-this % stops a space
\thanks{An abridged version of this paper has been presented
  at
  the 2013 IEEE Conference on Decision and Control~\cite{orel-cdc13}. Research supported in part by the Israel Science Foundation~(ISF) and by
the Israel Strategic Alternative Energy Foundation~(ISAEF). }% <-this % stops a space
\thanks{$^{1}$OR is with the School of Electrical Engineering-Systems, Tel Aviv University, Israel 69978. Email: \tt{orelron@gmail.com}} \thanks{$^{2}$MM (corresponding author) is with
the School of Electrical Engineering-Systems and the Sagol School of Neuroscience, Tel Aviv University, Israel 69978. Email: \tt{michaelm@eng.tau.ac.il}} \thanks{$^{3}$MSB    is with
the EECS Dept.,
         University of Kansas,
Lawrence, KS~66045-7605. Email: \texttt{msb@ku.edu}}
%%%
}
\begin{document}

\maketitle \thispagestyle{empty} \pagestyle{empty}

%%%%%%%%%%%%%%%%%%%%%%%%%%%%%%%%%%%%%%%%%%%%%%%%%%%%%%%%%%%%%%%%%%%%%%%%%%%%%%%%

\begin{abstract}
%%%%%
We consider a linear consensus system with~$n$ agents
that can switch between~$r$ different   connectivity
patterns.
A natural question is which switching law yields the best (or worst)
possible speed of convergence to consensus?
We formulate this question in a rigorous manner by relaxing
the switched system into a bilinear consensus control system,
with the control playing the role of the switching law.
A best (or worst) possible switching law then corresponds to
an optimal control.
We derive a necessary condition for optimality,
stated in the form of a maximum principle~(MP).
Our approach, combined with suitable algorithms for numerically
solving optimal control problems, may be used to obtain explicit
lower and upper bounds on the achievable rate  of convergence to consensus.
 We also show that the system will converge to consensus for any switching law
 if and only if a certain~$(n-1)$ dimensional linear switched system converges to the origin
 for any switching law. For the case~$n=3$ and~$r=2$, this yields
 a necessary and sufficient condition for convergence to consensus that
 admits a simple graph-theoretic interpretation.
%%%%%
\end{abstract}

\begin{IEEEkeywords} Maximum principle, variational analysis, linear switched system, bilinear control system, consensus under arbitrary switching laws, optimal consensus level, worst-case rate of
consensus, common quadratic Lyapunov function.
 \end{IEEEkeywords}

\section{Introduction}
%%%%%%%%%%%%%%%%%%%%%%%

There is an increasing interest in distributed control and coordination
of networks consisting of multiple autonomous agents~\cite{eger2010}.
Applications in this field often demonstrate time-varying connectivity
between the agents~\cite{consen_switch,dynego}, and a lack of centralized control.

A basic problem in this field is reaching agreement between the agents
upon certain quantities of interest. Examples of such
\emph{consensus problems} include formation control among a group
of moving agents, computing the averages of certain local measurements,
synchronizing the angles of several coupled oscillators,
and more (see, e.g.,~\cite{Consensus07,dorf-bullo} and the references therein).

In this paper, we consider  a continuous-time time-varying
consensus network as a  linear switched system
\begin{align} \label{eq:mains}
%%%%%
                            \dot{x}(t)&=A_{\sigma(t)}x(t),\quad
                            x(0) =x_0,
%%%
\end{align}
where~$x:\R_+ \to \R^n$, $\sigma:\R_+ \to \{1, \dots,r\}$ is a piecewise
constant switching signal, and~$A_i \in \R^{n \times n}$, $i=1, \dots,r$,
is a Metzler matrix with zero row sums. This models switching between~$r$
linear consensus subsystems.
Let~$1_n:=\begin{bmatrix} 1 &  \dots &1 \end{bmatrix}' \in \R^n$.
 Note that the assumptions on the~$A_i$s imply that~$c1_n$, $c \in \R$,
  is an equilibrium point of~\eqref{eq:mains}.

Since the~$A_i$s are Metzler,~\eqref{eq:mains} is a \emph{positive linear switched system}~(PLSS).
Positive linear systems have many properties that make them more amenable to analysis
(see, e.g.,~\cite{rantzer_2014}). However, this is not necessarily true for PLSSs (see, e.g.,~\cite{gurvits-shorten-mason07,lior}).

For a given switching law~$\sigma$, let~$x(t,\sigma)$ denote the solution of~\eqref{eq:mains} at time~$t\geq0$.
\begin{Definition}
We say that~\eqref{eq:mains}
  \emph{converges to consensus
  for a   switching law~$\sigma$}
  if~$\lim_{t\to\infty} x(t,\sigma) = c1_n$
 for some~$c \in \R$. In other words, all the state-variables converge to
 the common value~$c$.
 We say that~\eqref{eq:mains} \emph{uniformly converges to consensus}~(UCC)
 if it converges to consensus
  for \emph{any}   switching law and any~$x_0 \in \R^n$.
\end{Definition}

It is clear that the behavior of the switched consensus
system~\eqref{eq:mains} may be quite different for different switching laws. This naturally  raises the
following questions. \begin{Question}\label{prob:opt_switch}
%%%
What is the  switching law that yields the best possible speed of convergence to consensus? \end{Question}
%%%%
\begin{Question}\label{prob:worst}
What is the  switching law  that  yields  the worst possible speed of convergence to consensus?
 \end{Question}
\begin{Question}\label{prob:wcsl} Is
system~\eqref{eq:mains} UCC? \end{Question}
 \begin{Question}\label{prob:guas_cons} Is it possible that for some switching law
the switched system reaches a consensus although each subsystem by itself does not reach consensus? \end{Question}

Some of these questions are   theoretical in the sense that implementing
 an optimal switching law
usually requires  a  \emph{centralized} control. Nevertheless,
 the information obtained from these questions
may still be quite useful in real-world applications. For example,
any consensus protocol, including those that are based on \emph{local} information,
  may be rated by comparing its behavior to the upper and lower bounds
provided by the solutions to Questions~\ref{prob:opt_switch}
and~\ref{prob:worst}.
As another example, Question~\ref{prob:wcsl} is
important because in some scenarios the switching between protocols may depend on unknown or uncontrolled conditions. An affirmative answer to Question~\ref{prob:wcsl} guarantees
reaching consensus even in the worst possible case.

The goal of this paper is to state these questions in a rigorous  manner, and
 develop  an optimal control approach for addressing them.
Our approach  is motivated by the \emph{global uniform asymptotic stability}~(GUAS) problem for switched systems, that is, the problem of assuring stability under \emph{arbitrary} switching
laws~(see, e.g.~\cite{libsur99,liberzon_book,shorten,sun_ge_2}). The \emph{variational approach}, pioneered by E. S. Pyatnitsky~\cite{pyat70,pyat71}, addresses  this question by trying to characterize the
``most destabilizing'' switching law. If the switched system is asymptotically
stable for this switching law  then it is GUAS;
 see the survey papers~\cite{bar-cdc,mar-simple} for more details
(see also~\cite{Boscain2009stability_conditions,ugo-switched-siam} for some related considerations).

The main contributions of this paper include the following.
 We rigorously formalize the  questions above as optimal control problems, with the control corresponding to  the switching law, and derive a
maximum principle~(MP) that provides a necessary condition for a control to be optimal. When~$n=2$, this MP leads to  a complete solution of the optimal control problem.
Using a dimensionality reduction argument we show that
\eqref{eq:mains} is UCC if and only if a certain~$(n-1)$-dimensional
 switched linear system is GUAS. For the case~$n=3$ and~$r=2$,
this leads to two explicit results:
  (1)~a \emph{necessary and sufficient}
condition for UCC  that admits a natural graph-theoretic interpretation;
and (2)~a  proof that there always exists an optimal control
that belongs to a  set of ``nice'' controls.

We use standard notation. Column vectors are denoted by lower-case letters and matrices by capital letters. For a matrix~$M$, $\tr(M)$ is the trace of~$M$,~$M'$ is the transpose of~$M$,
and~$M>0$ means that~$M$ is symmetric and positive-definite.
 The \emph{Lie-bracket} of two
matrices~$A,B \in \R^{n\times n}$, is the matrix~$[A,B]:=BA-AB$.

%%%%%%%%%%%%%%%%%%%%%%%%%%%%%%%%%%%%%%%%
\section{Optimal control formulation}\label{sec:opt_cont_formul}
%%%%%%%%%%%%%%%%%%%%%%%%%%%%%%%%%%%%%%%%
We begin by quantifying    the
``distance to consensus''. This can be done in several ways.
 We use the function~$V:\R^n \to \R_+$ defined by
 \be \label{eq:vfunc}
                            V(x):=\sum_{i=1}^n (x_i-\Ave(x) )^2,
 \ee
where~$\Ave(x):= \frac{1}{n}1_n'x   $  (see, e.g.,~\cite{Mor_consensus,garin_schenato2010,Fagnani2008}).
Note that $V(x) \geq 0$, with equality if and only if~$x=c1_n$ for some~$c\in\R$.

Fix  an arbitrary  final time~$T>0$. We formalize Question~\ref{prob:opt_switch} as follows.
\begin{Problem}\label{prob:sw}
                    Find a switching law that {\sl minimizes}~$V(x(T))$.
\end{Problem}
%%%%
In other words, the problem is to determine a switching law that, given the  initial condition~$x(0)$ and the final time~$T$, ``pushes'' the system as close as possible to   consensus
(as measured by~$V$)
 at the  final time~$T$.
Similarly,  Question~\ref{prob:worst} becomes:
 \begin{Problem}\label{prob:maxv}
%%%
                    Find a switching law that {\sl maximizes}~$V(x(T))$.
%%%
\end{Problem}

Problems~\ref{prob:sw} and~\ref{prob:maxv}
 are in fact ill-posed, as the optimal switching law may not be piecewise-constant. To overcome this, we apply the same  approach used in the variational
analysis of the GUAS problem.
The first step is to relax~\eqref{eq:mains} to the more general  \emph{bilinear consensus
control system}~(BCCS)
\begin{align}\label{eq:bil}
        \dot{x}&=\left(\sum_{i=1}^r{u_iA_i}\right)x,\quad u=\begin{bmatrix} u_1&\dots& u_r \end{bmatrix} \in  \U,\nonumber \\
        x(0)&=x_0,
\end{align}
where~$\U$
is the set of measurable control functions
satisfying~$u_i(t)\geq 0$,~$i=1,2,\dots,r$,
and~$\sum_{i=1}^r{u_i(t)}=1$ for all~$t\in[0,T]$.

\begin{Remark}\label{rem:connect}
%%%%%%%%%%%%%%%%%%%
Note that for~$u_i(t)\equiv 1$
\eqref{eq:bil} becomes~$\dot{x}=A_ix$.
Thus, every trajectory of~\eqref{eq:mains}
is also a trajectory of~\eqref{eq:bil} corresponding to a bang-bang control.
For a control~$u \in \U$, let~$x(t,u,x_0)$ denote the solution of~\eqref{eq:bil} at time~$t$. For a subset of controls~$\W \subseteq \U$, let $
            R(T,\W,x_0):=\{x(T,w,x_0):w\in \W\} ,
$ that is, the reachable  set at time~$T$ using
   controls in~$\W$. Let~$\B \subset \U$ denote the subset of piecewise constant bang-bang controls. It is well-known~\cite{suss_glrn}
that~$R(T,\B,x_0)$ is a dense subset of~$R(T,\U,x_0)$. In other words, for every~$u \in \U$  the solution at time~$T$ of~\eqref{eq:bil} can be approximated to arbitrary precision using
a solution at time~$T$
 of the switched system~\eqref{eq:mains}.~\qed
\end{Remark}

From here on, we will ``forget'' the switched system~\eqref{eq:mains}
and consider the bilinear control system~\eqref{eq:bil} instead. This
is justified by
Remark~\ref{rem:connect}.
Note  that~$V$ in~\eqref{eq:vfunc} can be written as
$
             V(x) = x'Px,
$ where
$
P:=I-\frac{1}{n} 1_n 1_n'$.
%%%%
%%%%

The second step in the variational approach is to convert Problem~\ref{prob:sw} into the following optimal control problem. \begin{Problem}\label{prob:bil}
Find a control~$u \in U$ that~\emph{minimizes} $V(x(T,u))$.
\end{Problem}

By a standard argument~\cite{filippov-paper}, Problem~\ref{prob:bil} is well-defined, i.e. $\min_{u \in \U} V(x(T,u))$ exists, and there exists an \emph{optimal control}~$u^* \in \U$
such that $
                        V(x(T,u^*))=  \min_{u \in \U} V(x(T,u)).
$

\begin{Example}\label{exa:nshave2}
%%%%%%
Consider the case~$n=2$.
Since the matrices are Metzler with zero row sums, we can write
\be\label{eq:matnshave2}
%%%%
A_i=    \begin{bmatrix}
            -a^i_{12}   &a^i_{12}   \\
            a^i_{21}    &-a^i_{21}
        \end{bmatrix},  \quad  i=1,2,\dots,r,
\ee
with~$a^i_{kj}\geq 0$.
% Thus,
% \[
%            B=A_2-A_1= \begin{bmatrix}
%  -a^2_{12}+ a^1_{12}   & a^2_{12} -a^1_{12}  \\
%  a^2_{21}- a^1_{21}   & -a^2_{21}+a^1_{21}
% %%
%\end{bmatrix}.
% \]
In this case,
\begin{align*}
                 \dot V(x)& = x'(\sum_{i=1}^r{(PA_i+A_i'P)u_i})x\\
                &=2\left(\sum_{i=1}^r{\tr(A_i)u_i}\right)x'Px \nonumber \\
                &=2\left(\sum_{i=1}^r{\tr(A_i)u_i}\right)V(x),
\end{align*}
so
 \begin{align}\label{eq:v2sol}
V(x(T,u))=V(x_0)\exp\left(2\sum_{i=1}^r{\tr(A_i)
\int^T_0{u_i(t)\,\mathrm{d}t}}\right ).
\end{align}
Without loss of generality, assume that the matrices
are  ordered  such that
%%%
\be\label{eq:orderAis}
\tr(A_1)\leq\tr(A_2)\leq\dots\leq\tr(A_r).
\ee
%%%
Then~\eqref{eq:v2sol} implies the following.
If~$x_0=c 1_2 $ then~$V(x_0)=0$, so~$V(x(T,u))=0$ for all~$u\in\U$
i.e., every control is optimal.
If~$\tr(A_1)=\tr(A_r)$   then~$V(x(T,u))$ does not depend on~$u$, so again every control is optimal.
If~$\tr(A_1)<\tr(A_2)$, then (recall that we are considering the problem
of minimizing~$V(x(T,u))$),
\be\label{eq:uopt2}
            u^*(t) \equiv e^1
\ee
is the unique optimal control, where~$e^1\in\R^r$ is the first column of the~$r\times r$ identity matrix.
If there exists an index~$1\leq k < r$ such that~$\tr(A_i)=\tr(A_k)$
for every~$i<k$, and~$\tr(A_k)<\tr(A_{k+1})$,
then every control~$u\in\U$ satisfying~$\sum_{i=1}^{k}{u_i(t)} \equiv 1$
is an optimal control.~\qed
%%%
\end{Example}

%\begin{Remark}\label{rem:n2eig}
%Recall that~$\tr(B)=\tr(A_2)-\tr(A_1)$, where~$\tr(A_i)=-(a_{12}^i+a_{21}^i)$, which is also the eigenvalue responsible for the convergence to consensus (the other eigenvalue is
%of-course~$0$). Thus the optimal control will choose the sub-system with the more dominant convergence eigenvalue.
%\end{Remark}
% MM: i deleted this remark, as it is stated later on in the paper

We conclude that when~$n=2$ there always exists an optimal control  that is   bang-bang   with no switches. The next example demonstrates  that this property no longer holds when~$n=3$.

\begin{Example}\label{exa:nshave3}
%%%%%%%%%%%%%%%%%%%%%%%%%%%%%%%%%%%%%%%%%%%%%
Consider Problem~\ref{prob:bil} with~$n=3$,~$r=2$,
\[ A_1=\begin{bmatrix} -3 &3 & 0  \\
 2 &-2  & 0 \\
 0  &0.01  & -0.01
%%%
\end{bmatrix},\quad
A_2=\begin{bmatrix} -2 &2  & 0   \\
 1 &-1  & 0   \\
 0  &0.1  & -0.1 \\
 \end{bmatrix},
\] $T=0.5$, and~$x_0=\begin{bmatrix} 1& 2&2  \end{bmatrix}'$.
%%%%%%%%%%%%%%%%%%%%%%%
%%%%%%%%%%%%%%%%%%%%
Applying a simple numerical algorithm for determining the optimal control yields
%%%%%%%%%%%%%%%%%%%%%%%%%%%%%%%%%%
 \be\label{eq:optuexa1}
u^*_1(t)= \begin{cases}  0 ,& t \in [0,\tau), \\
                       1, & t \in [\tau, 0.5],
\end{cases} \ee
with $ \tau \approx 0.264834. $
 The corresponding trajectory satisfies \begin{align*} x^* (T)&=\exp(A_1 (T-\tau)) \exp(A_2 \tau) x_0\\
   &= \begin{bmatrix}  1.552900 &
   1.692310&
   1.996691  \end{bmatrix}',
\end{align*}
%%%%
and~$V(x^*(T))= 0.103011 $. On the other hand, if we use only one of the subsystems then we get either
%%%
$
V(\exp(A_1  T )   x_0)=0.113772$,  or $
  V(\exp(A_2T)x_0) =   0.112562. $
  Thus, in this case the switching indeed strictly improves the convergence to consensus at the final time~$T$.~\qed
%%%
%%%
\end{Example}

\begin{Example}\label{exa:nshave4}
%%%%%%%%%%%%%%%%%%%%%%%%%%%%%%%%%%%%%%%%%%%%%
Consider Problem~\ref{prob:bil} with~$n=4$,~$r=2$,
\[ A_1=\begin{bmatrix} -1 &1 & 0 & 0  \\
 1 &-1  & 0 & 0\\
 0 &0  & -2 & 2\\
 0 & 0 &1  & -1
%%%
\end{bmatrix},\;
A_2=\begin{bmatrix} -1 &0  & 0 &1  \\
 0 &-1 &1  & 0   \\
 0  &2  & -2 &0 \\
 1 &0  & 0 &-1
 \end{bmatrix},
\] $T=2 $, and~$x_0=\begin{bmatrix} 1& -1.9&0.9 &-2  \end{bmatrix}'$. It is straightforward to verify that each sub-system does not reach consensus, being associated with a
disconnected graph.
Applying a simple numerical algorithm for determining the optimal control yields
 \be\label{eq:optuexa4}
u^*_1(t)= \begin{cases}
 0 ,& t \in  [0,\tau_1) \cup (\tau_2, T] , \\
                       1, & t \in [\tau_1, \tau_2],
\end{cases} \ee with
 $\tau_1 \approx 0.102230$ and~$ \tau_2 \approx 1.116872. $ The corresponding trajectory  satisfies \begin{align*} x^* &(T) =\exp(A_2 (T-\tau_2)) \exp(A_1 (\tau_2-\tau_1))
\exp(A_2 \tau_1) x_0\\
   &= \begin{bmatrix}  -0.614905 &-0.721797 &-0.744670 &-0.740963  \end{bmatrix}',
\end{align*}
%%%%
and~$V(x^*(T))= 0.011265 $.
This suggests  that the optimal switching  does
lead  to consensus as~$T\to\infty$. The answer to Question~\ref{prob:guas_cons} is thus yes.
 Note that it follows from well-known results  that the switched system
 can converge to consensus for suitable switching laws,
 as the requirement for \emph{integral connectivity}~\cite{Mor_consensus}
  holds.~\qed
%%%
%%%
\end{Example}

\section{Main results}
%%%%%%%%%%%%%%%%%%%%%%%%%%%%%%%%%%%%%%

 %%%%%%%%%%%%%%%%%%%%%%%%%%%%%%%%%%%%
\subsection{Maximum principle}
%%%%%%%%%%%%%%%%%%%%%%%%%%%%%%%%%%%%%
An application of the celebrated  Pontryagin maximum principle~(PMP) (see, e.g.,~\cite{liberzon_cov,agrachev-book}) yields the following result.
\begin{Theorem}\label{thm:mp}
%%%%%%%%%%%%%%%%%%%%%%%%%%%%%%%%%%%
Let~$u^* \in \U$ be an optimal control
 for Problem~\ref{prob:bil}, and let~$x^*$ denote the corresponding trajectory of~\eqref{eq:bil}.
 Define the adjoint~$\lambda:[0,T] \to \R^n$ as the solution of
\begin{align}\label{eq:lam}
%%%%%
\dot{\lambda}(t)=-\left(\sum_{i=1}^r{u_i^*A_i}\right)' \lambda(t),\quad
\lambda(T)= P x^*(T),
\end{align} and define the \emph{switching functions}
$
                m_i(t):=\lambda'(t)A_ix^*(t)$, $i=1, \dots,r.
$
Then the following property holds for almost all~$t \in [0,T]$.
 If there exits an index~$i$ such that~$m_i(t)>m_j(t)$ for all~$j\neq i$,
then
\be\label{eq:ustar}
u_i^*(t)=0.
\ee
\end{Theorem}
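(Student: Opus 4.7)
The plan is to invoke Pontryagin's Maximum Principle for the Mayer problem
$\min_{u \in \U} \phi(x(T,u))$ with $\phi(x) := V(x) = x' P x$ and dynamics
$\dot x = f(x,u) := \bigl(\sum_{i=1}^r u_i A_i\bigr)x$. The control set is the
probability simplex $\Delta := \{u \in \R^r : u_i \geq 0,\ \sum_i u_i = 1\}$,
which is compact and convex, so the standard PMP applies with no state
constraints and a free terminal state.

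First I would form the control Hamiltonian
\[
  H(x,\lambda,u) \;=\; \lambda' f(x,u) \;=\; \sum_{i=1}^r u_i\,\lambda' A_i x
  \;=\; \sum_{i=1}^r u_i\, m_i,
\]
where $m_i := \lambda' A_i x$. The adjoint equation then reads
$\dot\lambda = -\partial H/\partial x = -\bigl(\sum_{i=1}^r u_i^* A_i\bigr)' \lambda$,
which is exactly \eqref{eq:lam}. Since there is no running cost and the
terminal cost is $\phi(x) = x'Px$, the transversality condition gives
$\lambda(T) = \nabla \phi(x^*(T)) = 2 P x^*(T)$; the factor $2$ is immaterial
for the pointwise optimality condition below, so one may take
$\lambda(T) = P x^*(T)$ as in the statement.

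Next I would write out the pointwise minimization condition. PMP for a
minimization problem asserts that for almost every $t \in [0,T]$,
\[
  u^*(t) \;\in\; \arg\min_{u \in \Delta}\, \sum_{i=1}^r u_i\, m_i(t).
\]
Because $\sum_i u_i m_i$ is linear in $u$, its minimum over the simplex $\Delta$
is attained exactly on the face spanned by those indices $j$ achieving
$m_j(t) = \min_k m_k(t)$. In particular, if some index $i$ satisfies
$m_i(t) > m_j(t)$ for every $j \neq i$, then $i$ is not among the minimizers,
and every point of the optimal face has its $i$-th coordinate equal to zero.
This forces $u_i^*(t) = 0$, yielding \eqref{eq:ustar}.

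There is no real technical obstacle: the system is bilinear in $(x,u)$ and
control-affine, the control set is compact and convex, the terminal cost is
smooth, and Filippov-type existence of an optimal control has already been
noted after Problem~\ref{prob:bil}, so all hypotheses of PMP are met. The only
points that deserve care are (i) the sign/scaling convention in the
transversality condition, which I would verify by a direct needle-variation
computation $\delta x(T) \approx \Phi(T,t)\bigl(f(x^*,u)-f(x^*,u^*)\bigr)\delta t$
to show $\lambda(t)' := \nabla V(x^*(T))' \Phi(T,t)/2$ satisfies both
\eqref{eq:lam} and the claimed boundary condition; and (ii) the observation
that linearity of $H$ in $u$ turns the pointwise minimization into a simple
face-of-the-simplex statement, which is what produces the crisp condition in
\eqref{eq:ustar}.
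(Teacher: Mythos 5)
Your proposal is correct and follows essentially the same route as the paper: the authors also obtain Theorem~\ref{thm:mp} as a direct application of the PMP for the Mayer problem with terminal cost $V(x(T))$, and their (self-contained) verification is exactly the needle-variation computation you sketch, yielding $V(\tilde x(T))-V(x^*(T))=2\sum_i\bigl(a_i-u_i^*(\tau)\bigr)m_i(\tau)+o(\varepsilon)$ and then shifting mass off the index with the strictly largest $m_i$. You also correctly note the two points that need care -- the harmless factor of $2$ in $\lambda(T)$ and the reduction of the pointwise minimization of the $u$-linear Hamiltonian to a face-of-the-simplex statement -- so nothing is missing.
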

%%%%%%%%%%%%%%
\begin{comment}
For the sake of completeness, we  include a self-contained proof of Thm.~\ref{thm:mp} in the Appendix.
\end{comment}

\begin{Corollary} \label{thm:mp2}
Suppose that~$r=2$, i.e. the system switches between~$A_1$ and~$A_2$.
Let
$
                m(t):=\lambda'(t)(A_1-A_2)x^*(t).
$
Then
\be\label{eq:ustar2}
u^*(t)= \begin{cases}
            \begin{bmatrix}0 & 1\end{bmatrix}',   &m(t)>0, \\
            \begin{bmatrix}1 & 0\end{bmatrix}',   &m(t)<0.
        \end{cases}
\ee
\end{Corollary}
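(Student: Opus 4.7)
The plan is to obtain Corollary~\ref{thm:mp2} as an immediate specialization of Theorem~\ref{thm:mp} to $r=2$. The first step is to observe that the scalar switching function $m(t)$ defined in the corollary decomposes as the difference of the two switching functions of Theorem~\ref{thm:mp}: indeed,
\begin{equation*}
    m(t) = \lambda'(t)(A_1 - A_2)x^*(t) = \lambda'(t) A_1 x^*(t) - \lambda'(t) A_2 x^*(t) = m_1(t) - m_2(t).
\end{equation*}
Hence the sign of $m(t)$ records which of $m_1, m_2$ is strictly larger.

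Next, I would split into cases. Suppose $m(t) > 0$ at some $t$ (outside the measure-zero exceptional set allowed by Theorem~\ref{thm:mp}). Then $m_1(t) > m_2(t)$, so $i=1$ is the unique maximizer among $\{m_1, m_2\}$, and Theorem~\ref{thm:mp} yields $u_1^*(t) = 0$. Because $u^*(t) \in \U$ satisfies $u_1^*(t) + u_2^*(t) = 1$ with $u_i^*(t) \geq 0$, this forces $u_2^*(t) = 1$, so $u^*(t) = \begin{bmatrix} 0 & 1\end{bmatrix}'$. The case $m(t) < 0$ is symmetric: $m_2(t) > m_1(t)$ gives $u_2^*(t) = 0$ and hence $u^*(t) = \begin{bmatrix} 1 & 0 \end{bmatrix}'$.

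There is no real obstacle here; the content of the corollary is the convenient reformulation of the inequality $m_1 \gtrless m_2$ as a sign condition on a single scalar function, combined with the fact that the one-dimensional simplex $\{u \in \R_+^2 : u_1 + u_2 = 1\}$ has only two extreme points, so ruling out one coordinate pins down the control completely. The only thing worth flagging is that the statement deliberately says nothing about the singular set $\{t : m(t) = 0\}$; on such intervals the Hamiltonian is not uniquely minimized and Theorem~\ref{thm:mp} places no constraint on $u^*$, so a full characterization would require additional (e.g.\ higher-order) necessary conditions, which are outside the scope of this corollary.
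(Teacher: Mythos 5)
Your argument is correct and coincides with the paper's own proof: both decompose $m(t)=m_1(t)-m_2(t)$, invoke Theorem~\ref{thm:mp} to kill the coordinate with the strictly larger switching function, and use $u_1^*+u_2^*=1$ to pin down the other. Your closing remark about the singular set $\{t: m(t)=0\}$ is a sensible observation but not needed for the corollary as stated.
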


\begin{IEEEproof}
The condition~$m(t)>0$ corresponds to~$m_1(t)>m_2(t)$
in Thm.~\ref{thm:mp}, hence~$u_1^*(t)=0$
and~$u_2^*(t)=1-u_1^*(t)=1$.
The proof in the case~$m(t)<0$ is similar.
\end{IEEEproof}

Note that the adjoint system~\eqref{eq:lam} is the relaxed version of a switched system switching between $\dot{\lambda}=-A_i' \lambda$,   and that~$(-A_i)'$ is a~$Z$ matrix (see, e.g.~\cite{Horn-JohnsonT})
with zero column sums.

\begin{comment}
 \begin{Example}\label{exa:sec}
 %%%%%%%%%%%%%%%%%%%%%%%%%%%%%%%%%%
Consider again the system in Example~\ref{exa:nshave3}. Recall that an optimal control is given in~\eqref{eq:optuexa1}.
%%%%%%%
Given~$u^*$, it is straightforward
 to  solve numerically    the two-point boundary value problem in  Thm.~\ref{thm:mp}
  to obtain~$x^*$ and~$\lambda$, and then  the switching function~$m$.
Fig.~\ref{fig:rfmm} depicts~$m$ as a function of time. It may be seen that~$m(t) <0$ for~$t \in (0,\tau]$, and~$m(t)>0$ for~$t\in (\tau,T]$. Thus,~$u^*$ indeed
satisfies~\eqref{eq:ustar}.~\qed
%%%
%%%
\end{Example}

\begin{figure}
  \begin{center}
  \includegraphics[height=6.5cm]{mplot.eps}
  \caption{Switching function~$m(t)$ in Example~\ref{exa:sec}.  }\label{fig:rfmm}
  \end{center}
\end{figure}
%%
\end{comment}

 \begin{Example}\label{exa:sec4}
 %%%%%%%%%%%%%%%%%%%%%%%%%%%%%%%%%%
Consider again the system in Example~\ref{exa:nshave4}. Recall that an optimal control is given in~\eqref{eq:optuexa4}.
%%%%%%%
Solving numerically
 the two-point boundary value problem yields the
 switching function~$m$ depicted  in Fig.~\ref{fig:rfm4}.
It may be seen that~$m(t) <0$ for~$t \in (0,\tau_1) \cup (\tau_2,T)$, and~$m(t)>0$ for~$t\in (\tau_1,\tau_2)$. Thus,~$u^*$ indeed satisfies~\eqref{eq:ustar}.~\qed
%%%
%%%
\end{Example}

\begin{figure}
  \begin{center}
  \includegraphics[height=6.5cm]{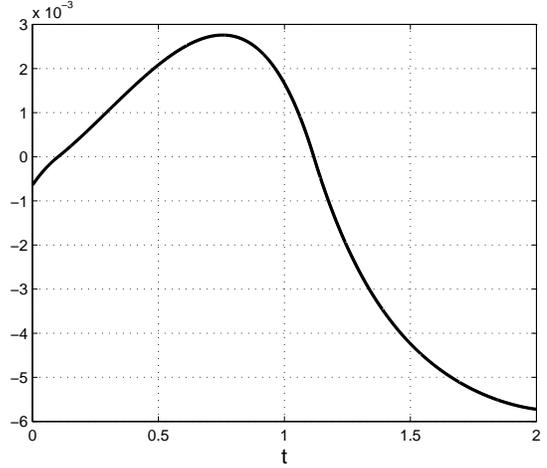}
  \caption{Switching function~$m(t)$ in Example~\ref{exa:sec4}.  }\label{fig:rfm4}
  \end{center}
\end{figure}

If the set~$\{t \in [0,T] : m_i(t)=m_j(t) \text{ for some } i \not = j \}$ contains isolated points then~\eqref{eq:ustar} implies
that~$u^*$ is a bang-bang control
corresponding to a   switching law in~\eqref{eq:mains}. However, in general  the optimal control may not be bang-bang. The next result, that follows immediately from Remark~\ref{rem:connect}, describes
 the relationship between the optimal control problem
for the BCCS~\eqref{eq:bil} and the original switched system~\eqref{eq:mains}.
\begin{Proposition}\label{prop:costvv}
Let~$V^*:=V(x(T,u^*))$. For every~$\varepsilon>0$ there exists a
piecewise constant  switching law~$\sigma$ for~\eqref{eq:mains}
yielding a  cost $ V(x(T,\sigma)) \leq  V^*+\varepsilon$.
 Furthermore, if   there exists an optimal control that is piecewise
 constant and bang-bang
 then there exists an optimal switching law~$\sigma^*$
 such that~$V(x(T,\sigma^*)) = V^*$.
\end{Proposition}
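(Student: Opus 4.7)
The plan is to derive both claims directly from the density statement in Remark~\ref{rem:connect} together with the continuity of~$V$.

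For the first claim, I fix an optimal control~$u^* \in \U$ with~$V(x(T,u^*)) = V^*$, and let~$y^* := x(T,u^*) \in R(T,\U,x_0)$. Remark~\ref{rem:connect} tells us that~$R(T,\B,x_0)$ is dense in~$R(T,\U,x_0)$, so for any~$\delta>0$ I can find a piecewise constant bang-bang control~$u^\delta \in \B$ such that~$\|x(T,u^\delta) - y^*\| < \delta$. Since~$V(x)=x'Px$ is a continuous (indeed smooth) function of~$x$, given~$\varepsilon>0$ I can choose~$\delta$ small enough so that~$|V(x(T,u^\delta)) - V(y^*)| < \varepsilon$, which yields~$V(x(T,u^\delta)) \leq V^* + \varepsilon$. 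Since a piecewise constant bang-bang control of~\eqref{eq:bil} corresponds exactly to a piecewise constant switching law~$\sigma$ of~\eqref{eq:mains} (again by the first sentence of Remark~\ref{rem:connect}), setting~$\sigma$ to be this switching law gives~$V(x(T,\sigma)) \leq V^*+\varepsilon$, as desired.

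For the second claim, suppose an optimal control~$u^*$ happens itself to be piecewise constant and bang-bang. Then Remark~\ref{rem:connect} directly provides a corresponding piecewise constant switching law~$\sigma^*$ for~\eqref{eq:mains} whose trajectory agrees with~$x(\cdot,u^*)$ on~$[0,T]$. In particular~$x(T,\sigma^*) = x(T,u^*)$, so~$V(x(T,\sigma^*)) = V^*$, and this is optimal since~$V^*$ is already the minimum over the larger class~$\U$ (which contains all switching laws via the bang-bang identification).

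There is no real obstacle here: the only ingredients are (i)~the density result already quoted in Remark~\ref{rem:connect}, (ii)~the continuity of the quadratic form~$V$, and (iii)~the identification of bang-bang controls with switching laws. The slightly delicate point worth stating explicitly is the continuity argument in the first part, since one must note that~$V$ is uniformly continuous on any bounded neighborhood of~$y^*$ and that one can restrict attention to such a neighborhood because the reachable set~$R(T,\U,x_0)$ is itself bounded (all trajectories of~\eqref{eq:bil} on~$[0,T]$ are uniformly bounded in terms of~$\|x_0\|$, $T$, and the~$A_i$). This makes the choice of~$\delta=\delta(\varepsilon)$ legitimate and completes the proof.
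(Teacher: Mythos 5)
Your proof is correct and follows exactly the route the paper intends: the paper gives no written proof, simply asserting that the proposition ``follows immediately from Remark~\ref{rem:connect}'', i.e.\ from the density of~$R(T,\B,x_0)$ in~$R(T,\U,x_0)$ together with the continuity of~$V$ and the identification of bang-bang controls with switching laws. Your explicit handling of the $\delta$--$\varepsilon$ step and the boundedness of the reachable set just fills in details the authors left implicit.
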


If~$\varepsilon_i \in \R_+$ is a decreasing sequence, with~$\lim_{i\to\infty} \epsilon_i=0$, then Prop.~\ref{prop:costvv} implies that for every~$i$ it is possible to find a
switching law~$\sigma_i$ such that $V(x(T,\sigma_i)) \leq  V^*+\varepsilon_i$. However, this does not imply that there exists a switching law yielding the optimal cost~$V^*$, as the limit
of a sequence of piecewise constant functions  is not necessarily a piecewise constant function.

%%%In the next section, we describe some application of the MP in Theorem~\ref{thm:mp}.

\subsection{Geometric considerations}
%%%%%%%%%%%%%%%%%%%%%%%%%%%%%%%%%%%%%%
We begin by applying   tools from the theory of finite-dimensional
Hamiltonian systems to our particular problem.
The basic idea is that every symmetry of the Hamiltonian yields
a first integral that can be used to simplify the optimal control problem;
  see~\cite[Ch.~6]{olver_book}.
The Hamiltonian of our optimal control problem is
\be\label{eq:hamil}
            H(x,\lambda):=\lambda'\left(\sum_{i=1}^r{u_iA_i}\right)x.
\ee
Since~$A_i$ has zero row sums,
$H(x,\lambda)$ is invariant with respect to the translation~$x \to x+1_n$; the corresponding first integral
is~$F(x,\lambda):=1_n'  \lambda $. Indeed,
$\frac{\partial F}{\partial x}=0$ and~$\frac{\partial F}{\partial \lambda}=1_n$.
Thus, $F(x(t),\lambda(t))$   is a first integral for the Hamiltonian system
and this yields the following result.
%%%%%%
\begin{Proposition}\label{prop:sumlam}
                The adjoint satisfies
                \be\label{eq:sumz}
                       1'_n \lambda (t)=0,\quad \text{for all } t\in[0,T].
                \ee
\end{Proposition}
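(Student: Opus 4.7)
The plan is to follow the hint spelled out in the preceding paragraph: combine the conservation law coming from the translation symmetry of the Hamiltonian with an evaluation at the endpoint~$t=T$, where the boundary condition~$\lambda(T)=Px^*(T)$ pins down the constant value.

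First, I would verify directly, without appealing to Noether's theorem, that~$t \mapsto 1_n'\lambda(t)$ is constant on~$[0,T]$. Differentiating along the adjoint equation~\eqref{eq:lam} gives
\[
\frac{d}{dt}\bigl(1_n'\lambda(t)\bigr)
= 1_n'\dot{\lambda}(t)
= -\sum_{i=1}^r u_i^*(t)\,1_n'A_i'\lambda(t)
= -\sum_{i=1}^r u_i^*(t)\,(A_i 1_n)'\lambda(t).
\]
Since each~$A_i$ is Metzler with zero row sums, $A_i 1_n = 0$ for every~$i$, so the right-hand side vanishes identically. Hence~$1_n'\lambda(t) \equiv 1_n'\lambda(T)$. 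This is the same first-integral identity~$F(x,\lambda)=1_n'\lambda$ alluded to before the statement; the zero-row-sum property is precisely what makes~$H(x,\lambda)$ in~\eqref{eq:hamil} invariant under the shift~$x\mapsto x+c1_n$.

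It remains to check that the constant value is zero, which will come from the transversality condition. From~$P=I-\tfrac{1}{n}1_n 1_n'$ we have~$P 1_n = 1_n - \tfrac{1}{n}1_n(1_n'1_n)=1_n-1_n=0$, and since~$P$ is symmetric this gives~$1_n'P=0$. Therefore
\[
1_n'\lambda(T) = 1_n' P x^*(T) = 0,
\]
and combined with the constancy established above we conclude~$1_n'\lambda(t)=0$ for all~$t\in[0,T]$, which is~\eqref{eq:sumz}.

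There is no real obstacle here: the only thing to be careful about is that the adjoint equation involves the transpose~$A_i'$, which is exactly what converts the zero-row-sum property~$A_i 1_n=0$ into annihilation of~$1_n'$ on the left. Everything else is a one-line check of the boundary condition.
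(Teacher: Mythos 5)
Your proof is correct and follows essentially the same route as the paper: constancy of $1_n'\lambda(t)$ plus the boundary condition $1_n'Px^*(T)=0$. The only difference is that you verify the constancy by direct differentiation using $A_i 1_n=0$ rather than quoting the Hamiltonian first-integral argument from the preceding paragraph, which is just an explicit unpacking of the same fact.
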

%%%%%%%%
\begin{IEEEproof}
%%%
We already know that~$1'_n \lambda(t)$ is constant, so in particular~$1'_n \lambda(t)   \equiv 1_n '  {\lambda}(T)$. Applying~\eqref{eq:lam} yields~$1'_n \lambda(t)    \equiv 1'_n Px^*(T)$
and since~$1'_n P=0'$, this completes the proof.~\end{IEEEproof}

%%%%%%%%%%%%%%%%%%%%%%%%%%%%%%
The next example demonstrates  that for~$n=2$
  the MP, combined with Prop.~\ref{prop:sumlam}, can be used
to derive~\eqref{eq:uopt2}.
 \begin{Example}
%%%%%%%%%%%%%%%%%%%%%%%
Differentiating~$m_i$ with respect to~$t$ and using~\eqref{eq:bil}  and~\eqref{eq:lam} yields \begin{align*}
\dot{m_i}&=\dot{\lambda}'A_ix^*+ \lambda' A_i \dot{x}^*\\
         &=\lambda' \left(\sum_{j\neq i}{u_j[A_j,A_i]}\right)x^*.
\end{align*}
%%%%%%%%%%%%%%%%%%%%%%%%%%%%%%%%5
Suppose that~$n=2$. Recall that in this case the matrices can be written as in~\eqref{eq:matnshave2}, and a calculation yields \[
            [A_j,A_i]=( a_{21}^i a_{12}^j-a^i_{12} a^j_{21}   )  \begin{bmatrix}
             -1& 1\\
             -1 & 1
             \end{bmatrix}.
\]
By Prop.~\ref{prop:sumlam},~$\lambda(t)=\begin{bmatrix} \lambda_1(t)&-\lambda_1(t) \end{bmatrix}'$ for all~$t$, so~$\dot{m_i}(t)\equiv 0 $. Thus, for every~$i=1,2,\dots,r$,
\begin{align}\label{eq:mt2}
                    m_i(t)&\equiv m_i(T)\nonumber \\
                        &= \lambda'(T)A_ix^*(T)\nonumber\\
                        &=(x^*(T))'P'A_ix^*(T)\nonumber\\
%%%                &=\tr(A_i)(x^*(T))'Px^*(T)\nonumber\\
                        &=\tr(A_i)  (x_1^*(T) - x_2^*(T))^2 /2.
\end{align}
Assume again that the matrices are ordered as in~\eqref{eq:orderAis}.
If~$x(0)=c1_2$, then the zero sum rows assumption implies that~$x_1(t)\equiv x_2(t)$ for all~$u \in \U$,
and thus~$V(x(T))=0$ for all~$u \in \U$.
We conclude that  in this case every~$u \in \U$ is optimal.
If~$x_1(0)\not =x_2(0)$ then~$x_1^*(T) \not =x_2^*(T)$, and combining~\eqref{eq:mt2},
the fact that~$\tr(A_i)=-a_{12}^i-a_{21}^i < 0$,   and~\eqref{eq:ustar} yields~\eqref{eq:uopt2}.~\qed
%%%%
\end{Example}

\begin{Remark}
Consider the linear consensus system~$\dot{x}=Ax$ with~$n=2$.
Let~$0=\eta_1 \geq \eta_2$ denote the eigenvalues of~$A$.
Recall that the rate of convergence to consensus
depends on~$\eta_2$ (see, e.g.,~\cite{consen_switch}). Since~$\trace(A)=\eta_1+\eta_2=\eta_2$, this explains why for~$n=2$
the optimal control depends on~$\sign(\trace(A_i))$.
The optimal control always chooses the matrix
with the ``better'' second eigenvalue.~\qed
\end{Remark}

\begin{Example}
 %%%%%%
 Consider the special case where the matrices also have zero column sums, i.e., $1_n'A_i =0'$.
  It is well-known (see, e.g.,~\cite{consen_switch}) that in this
case~$\Ave(x(t))$ is invariant, i.e.
 \be \label{eq:aveinv}
 \Ave(x(t)) \equiv \Ave(x_0).
  \ee
 Thus, if~$\lim_{t\to \infty} x(t) = c 1_n$ then~$c=\Ave(x_0)$. This is known as   \emph{average consensus}.
 Let us show that~\eqref{eq:aveinv} follows from the theory of
  Hamiltonian symmetry groups; see~\cite[Ch.~6]{olver_book}.
  Indeed, in this case the Hamiltonian $H$ in~\eqref{eq:hamil}
  is invariant with respect to the translation~$\lambda \to \lambda+1_n$; the corresponding first integral
is~$F(x,\lambda):=1_n'  x $, as
$\frac{\partial F}{\partial x}=1_n$ and~$\frac{\partial F}{\partial \lambda}=0$.
Thus, $F(x(t),\lambda(t))$   is a first integral for the Hamiltonian system, so
$
                        1_n'x(t) \equiv 1_n' x(0)
$
and this implies~\eqref{eq:aveinv}.~\qed
%%%%%
\end{Example}

\begin{Remark}
%%%%%%%%%%%%%%%%%%%%%%%
It is possible to provide an intuitive  geometric interpretation of~\eqref{eq:sumz}. To do this, consider for simplicity   the case~$n=r=2$.
Let~$u^*$ be an optimal control, and assume for concreteness
that \be \label{eq:xbelow}
                            x^*_1(T) >x^*_2(T),
\ee i.e.~$x^*(T)$ is ``below'' the  consensus line~$l:=\{x\in\R^2:x_1=x_2\}$ (see   Fig.~\ref{fig:vg}). Let~$\tilde{u}\in \U$ be the control obtained  by adding a  needle variation, with
width~$\varepsilon>0$, to~$u^*$ (as applied in the proof of the PMP), and let~$\tilde{x}$ denote the trajectory corresponding to~$\tilde{u}$. Let~$v$ be the vector such
that \[
            \tilde{x}(T)-x^*(T) =\varepsilon v+o(\varepsilon),
\] i.e. the difference, to first order in~$\varepsilon$,
 between~$ \tilde{x}(T)$ and~$x^*(T)$. Let~$\V$ denote the set of
 all these first-order directions for all possible needle variations.
 Then~$\V$ convex, and it is well-known (see e.g. \cite[Chapter 4]{liberzon_cov}) that~$\lambda(T)$ in the PMP satisfies
 \[
            \lambda'(T) v \geq 0,\quad \text{for all } v \in \V.
 \]
 Indeed, the optimality of~$u^*$ implies that~$\V$
 cannot span all of~$\R^2$, and since~$\V$ is convex, such a~$\lambda(T)$
 exists. On the other-hand,~\eqref{eq:lam} yields
 \[
        \lambda(T)=\frac{1}{2} (x_1^*(T)-x_2^*(T))\begin{bmatrix} 1 \\ -1\end{bmatrix} ,
 \]
 and using~\eqref{eq:xbelow} implies that~$\lambda(T)$ is as shown in Fig.~\ref{fig:vg}.
In other words,~$\lambda(T)$ is a normal to the line~$l$ and the MP states that~$\tilde{x}(T)$ cannot be closer to the ``consensus line''~$l$ than~$x^*(T)$.~\qed
%%%%%

More generally,
 recall that for~$y\in \R^n$
the \emph{disagreement vector} of~$y$
 is defined by $\delta(y ):=y -1_n \Ave(y )$ (see, e.g.,~\cite{consen_switch}).
 By the definition of~$P$, $P y=\delta(y)$ for all~$y$, and it  follows
from~\eqref{eq:lam} that
$
            \lambda(T)=\delta(x^*(T)).
$
Thus, the geometric interpretation of the MP
is that any needle perturbation of~$u^*$
cannot lead to a value~$\tilde x(T)$ that is closer to the
consensus hyperplane~$\{x\in \R^n: x_1=\dots=x_n\}$
than~$x^*(T)$.

Note also that since~$ P=P'P $, \begin{align*}
                    V(x(t))&= x'(t)P' P x(t)\\
                        &= \delta'(t) \delta(t).
\end{align*}
\end{Remark}
\FIG{geometry.pstex_t}{fig:vg}{Geometric interpretation  of Prop.~\ref{prop:sumlam} when $n=2$.
The vector~$v$ is~$\tilde x(T)-x^*(T)$, to first-order in~$\varepsilon$, and its inner product with~$\lambda(T)$
must be non-negative.}

\begin{comment}
%%%%%%%%%%%%%%%%%%%%%%%%%%%%%%
\begin{Proposition}
%%%
            Suppose that~$D \in \R^{n\times n}$ is a matrix with zero row sums.
            Then
            \[
                        p'(t)[P,D]=0',\quad\text{for all }t \in[0,T].
            \]
            %
\end{Proposition}
 \begin{IEEEproof} Note that \begin{align*}
                    [P,D] &= [I-\frac{1}{n}1_n 1_n',D]\\
                          &=(-1/n)  [ 1_n 1_n',D]\\
                          &=(-1/n)   (D  1_n 1_n'-1_{n\times n} D )\\
                          &=( 1/n)     1_n 1_n' D  ,
 %%%
\end{align*} where the last equation follows from the fact that~$D$ has zero row sums. Thus,~$p'(t) [P,D]= ( 1/n) p'(t)1_n 1_n' D $, and applying Prop.~\ref{prop:sumlam}
completes the proof.~\end{IEEEproof}
 \end{comment}
%%%%%

\subsection{Invariance with Respect to Permutations}
%%%%%%%%%%%%%%%%%%%%%%%%%%%%%%%%%%%%%%%%%%%%%%%%%%%%%%
Let~$\Sigma$ denote the set of all~$n\times n$ permutation matrices.
Fix an arbitrary $G\in\Sigma$, and define~$\tilde{x}(t,u)=Gx(t,u)$.
The dynamics for the~$\tilde{x}$ system is given by
%%%
\begin{align}\label{eq:xtilde}
\dot{\tilde{x}} &=G (\sum_{i=1}^r u_i A_i) G'\tilde{x},\nonumber \\
\tilde{x}(0)    &=Gx_0.
\end{align}
%%%
\begin{Proposition}
A control~$u^*$ is an optimal control for~\eqref{eq:bil}
if and only if it is an optimal control for~\eqref{eq:xtilde}.
\end{Proposition}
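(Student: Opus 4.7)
The plan is to show that for every admissible control $u$, the cost incurred by the permuted system coincides with the cost incurred by the original system, which makes the two minimization problems literally identical.

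First I would observe that by construction $\tilde x(t,u) = G x(t,u)$ solves~\eqref{eq:xtilde}: differentiating $Gx(t,u)$ and using~\eqref{eq:bil} gives $\dot{\tilde x} = G(\sum_i u_i A_i) x = G(\sum_i u_i A_i) G' \tilde x$, and the initial condition matches $\tilde x(0) = Gx_0$. So the cost of $u$ for the permuted system is
\begin{equation*}
V(\tilde x(T,u)) = (Gx(T,u))' P (Gx(T,u)) = x(T,u)' (G'PG) x(T,u).
\end{equation*}

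Next I would verify the key algebraic fact that $G'PG = P$ for every permutation matrix $G$. This uses two ingredients: $G$ is orthogonal so $G'G = I$, and $G$ fixes the all-ones vector, $G 1_n = 1_n$ (equivalently $1_n' G = 1_n'$), since permuting the entries of $1_n$ returns $1_n$. Therefore
\begin{equation*}
G'PG = G'\left(I - \tfrac{1}{n}1_n 1_n'\right)G = I - \tfrac{1}{n}(G'1_n)(1_n' G) = I - \tfrac{1}{n}1_n 1_n' = P.
\end{equation*}

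Combining the two steps yields $V(\tilde x(T,u)) = V(x(T,u))$ for every $u \in \U$. Consequently the two cost functionals are pointwise equal on $\U$, so they achieve their minima at exactly the same set of controls, proving the equivalence. There is no real obstacle here; the only thing to be careful about is flagging explicitly that a permutation matrix stabilizes $1_n$, which is what makes the $V$ cost permutation-invariant.
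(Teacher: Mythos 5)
Your proof is correct and follows essentially the same route as the paper's: establish $G'PG=P$ using $G'G=I$ and $G1_n=1_n$, then conclude $V(\tilde x(T,u))=V(x(T,u))$ for every $u\in\U$ so the two minimization problems coincide. Your explicit remark that the permutation matrix fixes $1_n$ is a welcome bit of extra detail the paper leaves implicit.
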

%%%
\begin{proof}
Note that
%%%
\begin{align*}
 G'PG   &=G'(I-(1/n)1_n 1'_n)G\\
        &=I-(1/n)G'1_n 1'_nG\\
        &=I-(1/n)1_n 1'_n\\
        &=P.
\end{align*}
%%%
Now fix an arbitrary control~$u\in\U$ and let~$x(t,u)$ denote the
corresponding solution of~\eqref{eq:bil} at time~$t$.
Define~$\tilde{x}(t,u)=Gx(t,u)$. Then
%%%
\begin{align*}
V(\tilde{x}(t,u)) &=\tilde{x}'(t,u)P\tilde{x}(t,u)\\
                 &=x'(t,u)G'PGx(t,u)\\
                 &=V(x(t,u)).
\end{align*}
%%%
This implies that a control~$u^*$ is an optimal
control for~\eqref{eq:bil} if and only if it is an optimal control
for the~$\tilde{x}$ system given by~\eqref{eq:xtilde}
and~$V(x(t,u^*))=V(\tilde{x}(t,u^*))$ for all~$t\in[0,T]$.
\end{proof}

\subsection{Dimension reduction }
%%%%%%%%%%%%%%%%%%%%%%%%%%%%%%%%%%%%%%%%%%%%
It is well-known that
the special structure of
the consensus matrix allows
a dimension reduction to the~$(n-1)$-dimensional
subspace
$
\{c1_n: c \in \R\}^\perp
$
 (see, e.g.,~\cite{consen_switch,morse03,blondel_con_guas}).
Here we apply this idea to reduce the dimension of the optimal control problem.

Note that~$s^1 := 1_n$ is an eigenvector of~$P$ corresponding to the eigenvalue~$0$.
Furthermore, any vector with sum entries equal to zero is an eigenvector of~$P$ corresponding to the eigenvalue~$1$.
 This implies that
there exists a set of~$n$
 linearly independent vectors $\{s^1,s^2,\dots,s^n\}$,
with~$s^k\in\R^n$, satisfying: (1)~$Ps^1=0$; and (2)~$Ps^k=s^k$, $k=2,\dots,n$.
%%%
 %%%
Let
 \[
            S:=\begin{bmatrix} s^1 & s^2 & \dots & s^n \end{bmatrix}'
\]
(note the transpose here). We  use~$S$ to reduce the order of the bilinear control system.
 %%%%
\begin{Proposition}\label{prop:redz}
%%%
Fix an arbitrary control~$u\in\U$. Let~$x(t)$ denote the solution of~\eqref{eq:bil} at time~$t$. Define~$y:[0,T] \to \R^n$ and~$z:[0,T] \to \R^{n-1}$ by
\[
y(t):=S  x(t),
\quad
z(t):=R y(t),
\]
 where~$R \in \R^{(n-1) \times n}$
 is the matrix
 \[
            R:=\begin{bmatrix}
               0 &1& 0 &\cdots &0 \\
               0 &0& 1 &\cdots &0 \\
               \vdots & & &\ddots\\
               0 &0& 0 &\dots &1 \\
                 \end{bmatrix}.
 \]
Then~$z$ satisfies \be \label{eq:zdyn}
            \dot{z}=\left(\sum_{i=1}^r{u_i\bar{A}_i}\right)z,\quad
            z(0)=R S  x_0,
\ee
where~$\bar{A}_i \in \R^{ (n-1) \times (n-1) }$
is the matrix
 obtained by deleting the first row and the first column of~$S  A_i S^{-1}$.
  Furthermore, there exists a positive-definite matrix~$M \in\R^{(n-1)\times (n-1)}$
such that
\be \label{eq:Vx2z}
V(x(t))= z'(t) M  z(t),\quad \text{for all } t\geq 0,
\ee
%%%
\end{Proposition}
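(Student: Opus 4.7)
The plan is to compute the dynamics of~$y=Sx$ by direct differentiation, exploit the fact that the first column of each matrix~$SA_iS^{-1}$ vanishes so as to decouple~$y_1$ from the remaining components, and thereby obtain a closed equation for~$z$. For the quadratic-form part, I would substitute~$x=S^{-1}y$ into~$V(x)=x'Px$ and observe that the resulting symmetric matrix has a zero first row and column; positive-definiteness of the remaining block will follow from the fact that~$y_1=0$ forces~$\Ave(x)=0$, under which~$V(x)$ reduces to~$\|x\|^2$.

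First, differentiating~$y=Sx$ gives~$\dot{y}=S(\sum_{i}u_iA_i)S^{-1}y$, so~$y$ satisfies a bilinear system with matrices~$\tilde{A}_i:=SA_iS^{-1}$. The key structural observation is that the first column of each~$\tilde{A}_i$ vanishes. Let~$c^1$ denote the first column of~$S^{-1}$. From~$SS^{-1}=I$ we have~$(s^k)'c^1=\delta_{k,1}$, so~$c^1$ is orthogonal to every~$s^k$ with~$k\geq 2$. Since~$Ps^k=s^k$ forces~$1_n's^k=0$ for those~$k$, the vector~$c^1$ lies in the one-dimensional orthogonal complement of the hyperplane~$\{v\in\R^n:1_n'v=0\}$, hence~$c^1=\alpha 1_n$ for some~$\alpha\in\R$. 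The zero row-sum assumption~$A_i1_n=0$ then gives~$A_ic^1=0$, so the first column of~$\tilde{A}_i$ is zero. Consequently~$\dot{y}_k$ does not depend on~$y_1$ for any~$k\geq 2$, and applying~$R$ yields~$\dot{z}=(\sum_{i}u_i\bar{A}_i)z$ with~$\bar{A}_i$ the lower-right~$(n-1)\times(n-1)$ block of~$\tilde{A}_i$, as required.

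For the quadratic-form identity, substituting~$x=S^{-1}y$ into~$V(x)=x'Px$ gives~$V(x)=y'\hat{P}y$ with~$\hat{P}:=(S^{-1})'PS^{-1}$. Since~$Pc^1=\alpha P1_n=0$, the first column of~$PS^{-1}$ vanishes, hence so does the first column of~$\hat{P}$; by symmetry of~$\hat{P}$, the first row vanishes as well. Writing~$\hat{P}$ in block form as~$\begin{bmatrix}0 & 0 \\ 0 & M\end{bmatrix}$ then yields~$V(x)=z'Mz$. To verify that~$M>0$, fix any nonzero~$z\in\R^{n-1}$ and set~$y=\begin{bmatrix}0 \\ z\end{bmatrix}\in\R^n$. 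Then~$x=S^{-1}y\neq 0$, and~$y_1=(s^1)'x=1_n'x=0$ means~$\Ave(x)=0$, so~$V(x)=\sum_i x_i^2=\|x\|^2>0$. Hence~$z'Mz>0$ for every nonzero~$z$.

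I do not foresee any substantial obstacle in carrying this out. The only step requiring care is the identification of the first column of~$S^{-1}$ as a scalar multiple of~$1_n$, which is precisely the algebraic consequence of choosing~$s^1=1_n$; once that is in hand, both the reduced dynamics and the reduced quadratic form follow from routine linear algebra, and the symmetry of~$\hat{P}$ automatically upgrades the vanishing of one row/column to the block-diagonal form used above.
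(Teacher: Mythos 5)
Your proof is correct and follows essentially the same route as the paper: the change of variables $y=Sx$, the observation that the first column of $S^{-1}$ is a multiple of $1_n$ (which you justify explicitly, whereas the paper calls it ``straightforward to verify''), and the vanishing first row and column of $(S^{-1})'PS^{-1}$. Your positive-definiteness argument for $M$ (evaluating $V$ at $x=S^{-1}\begin{bmatrix}0&z'\end{bmatrix}'$ and using $\Ave(x)=0$) is a slightly more direct variant of the paper's kernel characterization, but it is the same idea in substance.
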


\begin{Remark}\label{rem:wrm}
%%%%%%%%%%%%%%%%%%%%%%%%%
Let~$\|x\|_M:=\sqrt{x'Mx}$.
Prop.~\ref{prop:redz} implies  that the original optimal control problem, namely,
$\min_{u\in\U}V(x(T,u))$ becomes, in the~$z$-coordinates,
 the~$(n-1)$-dimensional
optimal control problem~$\min_{u\in \U} \|z (T,u)\|_M^2$.
However, in the bilinear dynamics of~$\dot{z} $ given in~\eqref{eq:zdyn} the matrices are not necessarily Metzler, nor with zero sum rows.
  This implies in particular that \emph{the switched consensus system is UCC if and only if the
 reduced-order~$z$ system is GUAS}.~\qed
 \end{Remark}

%%%%%
\begin{IEEEproof}[Proof of Proposition~\ref{prop:redz}]
%%%%%%%%%%%%%%%%
It is straightforward to verify that the first column of~$S^{-1}$ is a multiple of~$1_n$.
Since
\be\label{eq:ydynm}
            \dot{y}=\left(\sum_{i=1}^r{u_i S  A_i S^{-1}}\right)y,
\ee
 and  the first column of~$S  A_i S^{-1}  $ is zero,
 $\dot{y}_2,\dots,\dot{y}_n$ do not depend on~$y_1$,
i.e. the dynamics of the~$z_i$s  is given by the
$(n-1)$-dimensional bilinear control system~\eqref{eq:zdyn}.
Furthermore,
\begin{align*}
V(x)&=x'Px\\
    &= y' (S^{-1})'  P S^{-1} y.
\end{align*}
Since~$P 1_n=0$ and~$1_n'P=0'$, both the first column and the first row
of~$(S^{-1})'  P S^{-1} $ are   zero,
so~$V(x)= \begin{bmatrix} 0 & z'  \end{bmatrix} (S^{-1})'  P S^{-1} \begin{bmatrix} 0\\ z  \end{bmatrix}=
 z' Mz $,
 with~$M:=R (S^{-1})'P S^{-1} R'$.
A straightforward calculation shows that~$(S^{-1})'P S^{-1}v=0$
holds (up to a multiplication by a  scalar) only for~$v= S 1_n$, so~$M>0$.~\end{IEEEproof}

\begin{comment}
%%%%%%%%%%%
\begin{Remark}
Note that~$y_1=n\Ave(x)$.
In the particular case of the~\emph{average consensus} problem,
the first row of~$SA_iS^{-1}$ is also zero,
corresponding to the fact that~$y_1(t)=n Ave(x(t))$ is constant.~\qed
\end{Remark}
%%%%%%%
\end{comment}

\begin{Example}
%%%%%%%%%%%%%%%
Consider   the case~$n=2$. Recall that in this case~$A_i$ has the form~\eqref{eq:matnshave2}.
Take
$
s^1 =\begin{bmatrix}1&1 \end{bmatrix}'$, $
s^2 =\begin{bmatrix}1&-1 \end{bmatrix}'$.
%%%
Then
 $
    S A_i S^{-1} = \begin{bmatrix}
  0 & a^i_{21}-a^i_{12}  \\
  0 & -(a^i_{12}+a^i_{21})
\end{bmatrix}$,  so~$\bar{A}_i= -(a^i_{12}+a^i_{21})$.
Also, $ M=1/2$, so~$V(z)=  z^2/2$. Thus, the dimension reduction argument
yields a trivial problem of switching between~$r$ one-dimensional subsystems with eigenvalues~$-(a^i_{12}+a^i_{21})=\tr(A_i)$.~\qed
\end{Example}

%%%%%%%%%%%%%%%%%%%%%

\begin{Example}\label{ex:takes}
Consider the case~$n=3$. Then the matrices may be written as
\be\label{eq:ai3d}
 A_i=\begin{bmatrix}
  -a^i_{12}-a^i_{13} & a^i_{12} &    a^i_{13}  \\
  a^i_{21}  & -a^i_{21} -a^i_{23} &    a^i_{23}  \\
  a^i_{31}   & a^i_{32} &    -a^i_{31}-  a^i_{32}
\end{bmatrix} ,
\ee
with~$a^i_{kj}  \geq 0$. Take
$s^1=\begin{bmatrix}1&1&1 \end{bmatrix}'$,
$s^2=\begin{bmatrix}1&-1&0 \end{bmatrix}'$, and
$s^3=\begin{bmatrix}0&1&-1 \end{bmatrix}'$.
Then a calculation yields
\be\label{eq:sas3}
S  A_i S^{-1} =
%%%%%%%
\begin{bmatrix}
 0 & *&    * \\
 0 & \bar{a}_{11}^i &  \bar{a}_{12}^i \\
 0 & \bar{a}_{21}^i &  \bar{a}_{22}^i
\end{bmatrix},
%%%%
\ee
where~$*$ denotes entries  that are not important for the derivations below, and
\begin{align}\label{eq:aibar}
%%%%%%%%%%%%
\bar{a}_{11}^i &  = - (  {a^i_{12}}+ {a^i_{13}}+   {a^i_{21}} ) , &\quad
\bar{a}_{12}^i &=    a^i_{23}-a^i_{13}   ,   \\
\bar{a}_{21}^i &=   {a^i_{21}}-{a^i_{31}} , &\quad
\bar{a}_{22}^i &= - (  {a^i_{23}}+ {a^i_{31}}+
    {a^i_{32}} ) .\nonumber
%%%%%%%%%%
\end{align}
%%%
Clearly, the dynamics of~$y_2(t)$ and~$y_3(t)$ does not depend
on~$y_1(t)$, and  the~$z$ dynamics depends on
\be\label{eq:bara3}
\bar{A}_i : =
\begin{bmatrix}\bar{a}_{11}^i & \bar{a}_{12}^i\\
                \bar{a}_{21}^i &\bar{a}_{22}^i
  \end{bmatrix},\quad i=1,\dots,r.
\ee
Also,\begin{align}\label{eq:m2}
M=R(S^{-1})'PS^{-1}R'=\frac{1}{3}
\begin{bmatrix} 2&1 \\ 1 &2\end{bmatrix}.
\end{align}
~\qed
\end{Example}

The next result shows how the dimension reduction allows
to reduce the order of the optimal control problem from~$2n$ to~$2n-2$
(cf.~\cite[Ch.~6]{olver_book}).
%%%
%%%
\begin{Proposition}\label{prop:mpred}
%%%%%%%%%%%%%%%%%%%%%%%%%%%%%%%%%%%
Let~$u^* \in \U$ be an optimal control for Problem~\ref{prob:bil},
and let~$z^*$ denote the corresponding trajectory of the
$(n-1)$-dimensional system~\eqref{eq:zdyn}.
Define~$\mu:[0,T] \to \R^{n-1}$ by
\begin{align}\label{eq:lamred}
%%%%%
\dot{\mu}(t) =-\left(\sum_{i=1}^r{u_i^*\bar{A}_i}\right)' \mu(t),\;
\mu(T)  =    R(S^{-1})'PS^{-1}R'   z^*(T),
\end{align}
and let~$  \bar{m}_i(t):=\mu'(t)\bar{A}_iz^*(t)$.
                 Then for almost all~$t \in [0,T]$, if~$\bar{m}_i(t)>\bar{m}_j(t)$
for every~$ j \neq i$, then
\be\label{eq:ustared}
            u_i^*(t)=0.
\ee
\end{Proposition}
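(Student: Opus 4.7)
\begin{IEEEproof}[Proof proposal for Proposition~\ref{prop:mpred}]
The plan is to \emph{reduce} the statement to Theorem~\ref{thm:mp} by pushing the adjoint~$\lambda$ of the original problem through the change of coordinates~$y=Sx$, and then extracting its last~$n-1$ components. The key observation will be that, under Proposition~\ref{prop:sumlam}, the first component of the transformed adjoint vanishes, so the reduced switching functions agree with the original ones on the optimal trajectory.

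Concretely, I would proceed as follows. First, define~$\psi(t):=(S^{-1})'\lambda(t)$, the natural adjoint variable associated with the coordinates~$y=Sx$. Since~$\dot y=(\sum_i u_i^*SA_iS^{-1})y$, the adjoint equation~\eqref{eq:lam} transforms into
\be \label{eq:psidyn}
\dot\psi(t)=-\left(\sum_{i=1}^r u_i^*(t)\, SA_iS^{-1}\right)'\psi(t).
\ee
The proof of Proposition~\ref{prop:redz} already established that the first column of~$S^{-1}$ is a multiple of~$1_n$, whence the first column of~$SA_iS^{-1}$ is zero for every~$i$. Writing
\[
SA_iS^{-1}=\begin{bmatrix} 0 & c_i' \\ 0 & \bar A_i\end{bmatrix}, \qquad
\psi=\begin{bmatrix} \psi_1 \\ \tilde\psi \end{bmatrix},
\]
equation~\eqref{eq:psidyn} becomes~$\dot\psi_1=0$ and~$\dot{\tilde\psi}=-\psi_1\sum_i u_i^*c_i-(\sum_i u_i^*\bar A_i)'\tilde\psi$.

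The crucial step is to show that~$\psi_1(t)\equiv 0$. Because the first column of~$S^{-1}$ is a scalar multiple of~$1_n$, the first component of~$\psi=(S^{-1})'\lambda$ is a scalar multiple of~$1_n'\lambda$, and Proposition~\ref{prop:sumlam} gives~$1_n'\lambda(t)\equiv 0$. Hence~$\psi_1\equiv 0$, and the dynamics of~$\tilde\psi$ collapses to~$\dot{\tilde\psi}=-(\sum_i u_i^*\bar A_i)'\tilde\psi$, matching the definition of~$\mu$ in~\eqref{eq:lamred}. A short calculation then identifies the terminal conditions: using~$\lambda(T)=Px^*(T)$, the block-structure argument from the proof of Proposition~\ref{prop:redz} (both the first row and the first column of~$(S^{-1})'PS^{-1}$ vanish) gives~$\tilde\psi(T)=R(S^{-1})'PS^{-1}R'z^*(T)=\mu(T)$. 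By uniqueness of solutions to the linear ODE,~$\tilde\psi\equiv\mu$.

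Having identified the two adjoints, it remains to relate the switching functions. Using $\psi'SA_iS^{-1}y^* = \psi_1 c_i'z^* + \tilde\psi'\bar A_i z^*$ and $\psi_1\equiv 0$,
\[
m_i(t)=\lambda'(t)A_ix^*(t)=\psi'(t)(SA_iS^{-1})y^*(t)=\tilde\psi'(t)\bar A_iz^*(t)=\bar m_i(t).
\]
Therefore~$\bar m_i(t)>\bar m_j(t)$ for all~$j\neq i$ if and only if~$m_i(t)>m_j(t)$ for all~$j\neq i$, and Theorem~\ref{thm:mp} yields~$u_i^*(t)=0$, which is~\eqref{eq:ustared}.

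The only place where care is required is the bookkeeping in the block decomposition of~$SA_iS^{-1}$ and~$(S^{-1})'PS^{-1}$, together with the transposition in passing to the adjoint; once~$\psi_1\equiv 0$ is established, everything else is a routine identification. In particular, no new application of the PMP is needed beyond Theorem~\ref{thm:mp}, so the result really is a corollary of the MP in the reduced coordinates.
\end{IEEEproof}
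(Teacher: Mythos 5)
Your proposal is correct and follows essentially the same route as the paper: the paper likewise defines the transformed adjoint $\gamma(t):=(S^{-1})'\lambda(t)$, uses Proposition~\ref{prop:sumlam} to conclude its first component vanishes, identifies $\mu=R\gamma$ with the solution of~\eqref{eq:lamred}, and observes $m_i=\bar m_i$ before invoking Theorem~\ref{thm:mp}. Your write-up just makes the block-structure bookkeeping more explicit.
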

%%%%%%%%%%%
\begin{IEEEproof}
Let~$\gamma(t):=(S^{-1}) ' \lambda(t)$, where~$\lambda(t)$ satisfies~\eqref{eq:lam}.
Then
 \begin{align*}
%%%%%
\dot{\gamma}(t) =-\left( \sum_{i=1}^r u_i^* S  {A}_i S^{-1} \right)' \gamma(t),
\; \gamma(T)  = (S^{-1})' P S^{-1} y^*(T).
\end{align*}
The definition of~$\gamma$ and Prop.~\ref{prop:sumlam} imply
 that~$\gamma_1(t)\equiv 0$, so letting~$\mu(t): =R\gamma(t)$
 yields~\eqref{eq:lamred}.
 %%%%
 Also
$
                m_i(t)=\gamma'(t)  S  A_i S^{-1}  y^*(t)=\mu'(t)\bar{A}_iz^*(t)$.
Combining this with Thm.~\ref{thm:mp} completes the proof.~\end{IEEEproof}

%%%%%%%%%%%%%%%%%%%%%%%
\subsection{The case~$n=3$ and~$r=2$}
%%%%%%%%%%%%%%%%%%%%%%%
Consider a switched consensus system with~$n=3$ and~$r=2$.
Recall that in this case the dimensionality reduction yields a switched system
with dimension~$n=2$ and~$r=2$.
Second-order linear switched systems have been  studied extensively
and many explicit results are known,
especially when the number of subsystems is~$r=2$.
Using this, we derive   two results. The first is a \emph{necessary and sufficient}
condition  for~UCC.
The second is a characterization of an optimal control.

\subsubsection{Convergence to consensus}
%%%%%%%%%%%%%%%%%%%%%%%%%%%%%%%%%%%%%%%%
%%%%%%%
Recall that we can associate with~$\dot x=Ax$, where~$A\in\R^{n\times n}$ is a consensus matrix, a directed and weighted graph~$G=(V,E,W)$,
where~$V=\{1,\dots,n\}$,
and there is a directed edge from node~$i$ to node~$j\not = i$, with weight~$w_{ji}=a_{ji}$,
if and only if~$a_{ji} \not = 0$.
The graph~$G$ is said to contain a \emph{rooted-out branching as a subgraph} if
it does not contain a directed cycle and there exists a vertex~$v$ (called the root) such that for
every  vertex~$p\in V\setminus\{v\}$ there is a directed path from~$v$ to~$p$.
A necessary and sufficient condition for containing a rooted-out branching
  is that~$\rank(A)=n-1$~\cite[Ch.~3]{eger2010}.

For two matrices~$A,B \in \R^{n\times n}$,
let
$
         \co [A,B]:= \{  \alpha A+(1-\alpha)B :\; \alpha \in [0,1] \}.
$
\begin{Theorem}\label{thm:cons3}
The switched consensus system~\eqref{eq:mains} with~$n=3$ and~$r=2$
   is UCC  if and only if the digraph
   corresponding to every matrix in~$\co[A_1,A_2]$
   contains a rooted-out branching.
\end{Theorem}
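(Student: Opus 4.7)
The plan is to use the dimension-reduction machinery of Prop.~\ref{prop:redz}  and Remark~\ref{rem:wrm} to convert the UCC question into a planar GUAS question, then combine a spectral translation of the ``rooted-out branching'' condition with the well-developed theory of two-dimensional switched systems.

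First I would translate the graph condition into a spectral condition on the reduced matrices. Fix any $\alpha\in[0,1]$ and let $A_\alpha:=\alpha A_1+(1-\alpha)A_2$; this is again a Metzler matrix with zero row sums, since both properties are preserved by convex combinations. As recalled just before the theorem, $A_\alpha$ has a rooted-out branching iff $\rank(A_\alpha)=n-1=2$, which for such a matrix is the same as the zero eigenvalue being simple. A shift-and-scale argument (write $-A_\alpha=sI-B$ with $B\ge 0$ and $s\ge \max_i(-A_{\alpha,ii})$) combined with Perron--Frobenius shows that every eigenvalue of $A_\alpha$ other than $0$ has strictly negative real part. Because the similarity transformation by $S$ (Prop.~\ref{prop:redz}) block-triangularizes $A_\alpha$ with a leading zero eigenvalue, the eigenvalues of $\bar A_\alpha:=\alpha\bar A_1+(1-\alpha)\bar A_2$ are exactly the nonzero eigenvalues of $A_\alpha$. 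Hence ``every matrix in $\co[A_1,A_2]$ contains a rooted-out branching'' is equivalent to ``every matrix in $\co[\bar A_1,\bar A_2]$ is Hurwitz''. By Remark~\ref{rem:wrm} the original system is UCC iff the planar switched system $\dot z=\bar A_{\sigma}z$ is GUAS, so the theorem becomes a statement purely about planar switched systems.

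For the necessary direction I would argue contrapositively. If some $\bar A_\alpha$ is not Hurwitz, then $\dot z=\bar A_\alpha z$ admits a non-decaying trajectory; by the density result in Remark~\ref{rem:connect} (applied on successive intervals of fixed length and then concatenated), one constructs a piecewise constant switching signal $\sigma$ for which $z(t,\sigma)$ stays bounded away from $0$, which lifts to a switching law for the original system that fails to reach consensus. For the sufficient direction, note that in two dimensions $\trace(\alpha\bar A_1+(1-\alpha)\bar A_2)<0$ is automatic (both diagonals of each $\bar A_i$ are negative by~\eqref{eq:aibar}), so ``all convex combinations Hurwitz'' boils down to $\det(\bar A_\alpha)>0$ for $\alpha\in[0,1]$. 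One then applies the classical planar GUAS characterization (Shorten--Narendra type common quadratic Lyapunov function results, or equivalently the Boscain-style worst-case trajectory analysis arising from Cor.~\ref{thm:mp2}) to conclude GUAS of $\dot z=\bar A_\sigma z$.

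The main obstacle is the sufficient direction. For a generic planar switched system with two Hurwitz modes, Hurwitzness of the whole convex hull is \emph{not} enough for GUAS---Dayawansa--Martin-type counterexamples exist. The proof must therefore exploit the structural constraints that the reduced matrices $\bar A_i$ inherit from being obtained from a Metzler, zero-row-sum matrix, in particular the sign/magnitude bounds $-|\bar a^i_{11}|\le \bar a^i_{12}\le |\bar a^i_{22}|$ and $-|\bar a^i_{22}|\le \bar a^i_{21}\le |\bar a^i_{11}|$ that follow from~\eqref{eq:aibar} together with $a^i_{kj}\ge 0$. Verifying that these structural restrictions rule out the destabilizing planar configurations---either by exhibiting a common (possibly piecewise quadratic) Lyapunov function, or by showing via the MP of Thm.~\ref{thm:mp} that the worst-case switching trajectory in two dimensions necessarily decays when $\det(\bar A_\alpha)>0$ for all $\alpha\in[0,1]$---is where the genuine technical content of the theorem lies.
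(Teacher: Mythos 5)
Your setup coincides with the paper's: necessity via the constant controls $u_1\equiv\alpha$ and Remark~\ref{rem:connect}; reduction of UCC to GUAS of the planar $z$-system via Remark~\ref{rem:wrm}; and the translation of the rooted-out-branching condition on $\co[A_1,A_2]$ (through $\rank(\alpha A_1+(1-\alpha)A_2)=2$, linearity of $A\mapsto\bar A$, and the sign facts $\bar t_i:=\tr(\bar A_i)<0$, $\bar d_i:=\det(\bar A_i)\geq 0$) into Hurwitzness of every matrix in $\co[\bar A_1,\bar A_2]$. You also correctly diagnose that Hurwitzness of the convex hull alone does not imply planar GUAS. But you stop exactly there: you announce that the "genuine technical content" lies in ruling out the destabilizing planar configurations using the structure inherited from the Metzler, zero-row-sum matrices, without actually carrying that step out. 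That is a genuine gap --- the sufficiency direction, which is the substance of the theorem, is left as a statement of intent.

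The paper closes this gap with the CQLF criterion of Thm.~\ref{thm:CQLF}: two Hurwitz matrices $Z_1,Z_2\in\R^{2\times 2}$ admit a common quadratic Lyapunov function if and only if \emph{both} $\co[Z_1,Z_2]$ and $\co[Z_1,Z_2^{-1}]$ are Hurwitz. Your argument covers the first pencil; the second is where the nonnegativity of the off-diagonal entries actually enters. Concretely: if $\det\bigl(\alpha\bar A_1+(1-\alpha)\bar A_2^{-1}\bigr)=0$ for some $\alpha$, then $\bar A_2\bar A_1$ has a real negative eigenvalue; since $\det(\bar A_2\bar A_1)=\bar d_1\bar d_2>0$ it would then have two negative eigenvalues and hence negative trace, whereas $\tr(\bar A_2\bar A_1)$ is a sum of products of the form $a^1_{ij}a^2_{kl}\geq 0$, a contradiction. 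Combined with $\tr\bigl(\alpha\bar A_1+(1-\alpha)\bar A_2^{-1}\bigr)=\alpha\bar t_1+(1-\alpha)\bar t_2/\bar d_2<0$, this makes the second pencil Hurwitz, yields the CQLF, and hence GUAS of the $z$-system. This computation (or an equivalent worst-case trajectory argument) is precisely the piece your proposal must supply to become a proof.
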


 %%%%%%%%%%%
\begin{IEEEproof}
%%%%%%%%%%%%%%%%%%%%%%%%%%%%%%%%%%%%%%
Assume that the digraph corresponding to~$\alpha A_1+(1-\alpha) A_2$ does not
contain a rooted-out branching
for some~$\alpha \in[0,1]$.
Then the solution of the BCCS~\eqref{eq:bil} with~$ u_1(t)\equiv \alpha$
does not converge to
consensus for some~$x_0 \in \R^3$, and by Remark~\ref{rem:connect}, there is a solution of the switched consensus system~\eqref{eq:mains}
that does not converge to consensus.

To prove the converse implication,   assume from here on
that the digraph corresponding to every matrix in~$\co [A_1,A_2]$ contains a rooted-out
branching, so the rank of every matrix is~$2$.
We will show that in this case the reduced order~$z$ system is GUAS.
We require the following result.
  \begin{Theorem}\label{thm:CQLF}\cite{shorten_CQLF_long}
Let~$Z_1,Z_2\in\R^{2 \times 2}$ be two Hurwitz matrices.
 There exists a matrix~$Y>0$
such that
\be\label{eq:papa}
Y Z_i +  Z_i ' Y <0, \quad i=1,2,
\ee
if and only if every matrix in~$\co [Z_1,Z_2]$
 and in~$\co [Z_1,Z_2^{-1}]$ is a Hurwitz matrix.
%%%%%%
\end{Theorem}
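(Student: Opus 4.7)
The plan is to split the proof into a short necessity argument and a more substantial sufficiency argument via LMI duality, with the two-dimensionality of the ambient space playing the essential role.

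\emph{Necessity.} Suppose $Y>0$ satisfies $YZ_i+Z_i'Y<0$ for $i=1,2$. For any $\alpha\in[0,1]$, taking the corresponding convex combination of the two Lyapunov inequalities yields $Y(\alpha Z_1+(1-\alpha)Z_2)+(\alpha Z_1+(1-\alpha)Z_2)'Y<0$, so every matrix in $\co[Z_1,Z_2]$ is Hurwitz. To handle $\co[Z_1,Z_2^{-1}]$, pre-multiply $YZ_2+Z_2'Y<0$ by $(Z_2')^{-1}$ and post-multiply by $Z_2^{-1}$ to obtain $(Z_2^{-1})'Y+YZ_2^{-1}<0$. Hence $Y$ is also a Lyapunov matrix for $Z_2^{-1}$, and the convex-combination argument carries over.

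\emph{Sufficiency.} I would argue the contrapositive using LMI duality. The joint feasibility problem---find symmetric $Y$ with $Y>0$, $YZ_1+Z_1'Y<0$, $YZ_2+Z_2'Y<0$---is a strict LMI in the three-dimensional space $\mathcal{S}$ of $2\times 2$ symmetric matrices. By a Farkas-type alternative for semidefinite feasibility, infeasibility is equivalent to the existence of $Q_0,Q_1,Q_2\succeq 0$ in $\mathcal{S}$, not all zero, such that $Q_0=Z_1Q_1+Q_1Z_1'+Z_2Q_2+Q_2Z_2'$; the pairing $\tr(YQ_0)=\tr((YZ_1+Z_1'Y)Q_1)+\tr((YZ_2+Z_2'Y)Q_2)$ gives the strong alternative. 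The next step exploits that $Q_1,Q_2$ live in a three-dimensional symmetric-matrix space, so one may without loss of generality take them of rank at most one, $Q_i=w_iw_i'$ with $w_i\in\R^2$, reducing the identity to $Z_1w_1w_1'+w_1w_1'Z_1'+Z_2w_2w_2'+w_2w_2'Z_2'\preceq 0$.

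The final step is to translate this rank-one identity into a singular element of $\co[Z_1,Z_2]$ or $\co[Z_1,Z_2^{-1}]$. Since convex combinations of Hurwitz matrices in dimension two automatically have negative trace, the only way such a combination can fail to be Hurwitz is via $\det=0$. I would project the identity onto the span of $w_1,w_2$; if the two vectors are parallel, the identity directly exhibits a null vector for some matrix in $\co[Z_1,Z_2]$, whereas if they are independent, multiplying by $Z_2^{-1}$ and rescaling converts the identity into the analogous one for the pair $(Z_1,Z_2^{-1})$, producing instead an $\alpha^\star\in[0,1]$ with $\det(\alpha^\star Z_1+(1-\alpha^\star)Z_2^{-1})=0$. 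Either conclusion contradicts the standing hypothesis.

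The hard part, and the genuinely $2\times 2$-specific step, is the reduction of the dual certificate to rank-one matrices and the case analysis selecting between the two pencils $\co[Z_1,Z_2]$ and $\co[Z_1,Z_2^{-1}]$. The $n=2$ hypothesis enters twice: the symmetric-matrix space has dimension three, making the rank-one reduction possible, and $\det(\alpha Z_1+(1-\alpha)Z_2)$ is a quadratic polynomial in $\alpha$, so sign changes on $[0,1]$ are tractable to locate. In higher dimension both features fail and additional obstructions appear outside the two convex hulls, which is why the statement does not generalize beyond $n=2$.
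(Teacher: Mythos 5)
The paper does not prove this statement at all: Theorem~\ref{thm:CQLF} is quoted verbatim from the cited reference~\cite{shorten_CQLF_long} (Shorten et al.) and is used as a black box inside the proof of Theorem~\ref{thm:cons3}. So there is no in-paper proof to compare against; I can only assess your attempt on its own merits. Your necessity direction is correct and complete: the convex-combination argument and the congruence $(Z_2^{-1})'\bigl(YZ_2+Z_2'Y\bigr)Z_2^{-1}=(Z_2^{-1})'Y+YZ_2^{-1}$ are exactly right.

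The sufficiency direction, however, has two genuine gaps, and they sit precisely at the two places you yourself flag as ``the hard part.'' First, the rank-one reduction of the dual certificate is asserted, not proved. The justification offered --- that the symmetric $2\times 2$ matrices form a three-dimensional space --- does not yield the reduction: a PSD certificate $Q_i$ is generically rank two, and after spectral decomposition the identity involves up to four rank-one terms $Z_i w_{ij}w_{ij}'+w_{ij}w_{ij}'Z_i'$; collapsing these to a single term per matrix is exactly the content of the $n=2$ miracle and requires an actual argument (a Carath\'eodory-type analysis of the extreme rays of the cones $\{Z_iQ+QZ_i': Q\succeq 0\}$, which is where the published proofs do their real work). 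Second, the linearly independent case does not close as described. Substituting $\tilde w_2:=Z_2w_2$ does convert the term $Z_2w_2w_2'+w_2w_2'Z_2'$ into $Z_2^{-1}\tilde w_2\tilde w_2'+\tilde w_2\tilde w_2'(Z_2^{-1})'$, so you obtain a certificate of the same shape for the pair $(Z_1,Z_2^{-1})$ --- but with vectors $w_1$ and $Z_2w_2$, which need not be parallel. The singularity conclusion $\det\bigl(\alpha^\star Z_1+(1-\alpha^\star)Z_2^{-1}\bigr)=0$ only drops out of the parallel case (where $vw'+wv'\succeq 0$ forces $v=\lambda w$, $\lambda\ge 0$), so in the independent case you need a separate argument, e.g., changing basis so that $w_1=e_1$, $w_2=e_2$, reading off the entrywise inequalities from $\succeq 0$, and deriving the sign change of $\det\bigl(\alpha Z_1+(1-\alpha)Z_2^{-1}\bigr)$ from them. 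As written, the proof establishes the easy direction and a correct skeleton for the hard one, but not the hard direction itself.
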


Note that condition~\eqref{eq:papa} implies that~$Q(x):=x' Y x$ is a common quadratic Lyapunov  function~(CQLF) for both~$\dot x=Z_1 x$ and~$\dot x=Z_2 x$.

Thus, to prove GUAS of the second-order~$z$ system it is enough to show
that
\begin{align}\label{eq:gahur}
\co [\bar A_1,\bar A_2] \text{ is Hurwitz},
 \end{align}
 and
 \begin{align}\label{eq:secpen}
 \co [\bar A_1,\bar A_2^{-1} ] \text{ is Hurwitz}.
 \end{align}
A calculation yields
\begin{align*}
%%%
                   \bar t_i& :=\tr(\bar A_i)= -(a_{12}^i+a_{13}^i+a_{21}^i+a_{23}^i+a_{31}^i+a_{32}^i),\\
                    \bar d_i&:=\det(\bar A_i)=(a_{21}^i + a_{23}^i) (a_{13} ^i+ a_{31}^i)
                    + (a_{13} ^i+ a_{21}^i) a_{32}^i \\&+ a_{12} ^i(a_{23} ^i+ a_{31}^i + a_{32}^i).
%%%
\end{align*}
This implies that~$\bar t_i \leq 0 $, with equality if and only if~$A_i=0$.
Also,~$\bar d_i \geq 0$ with equality if and only if~$\rank(A_i)<2$.
%%%%%

Pick~$\alpha \in [0,1]$. By  assumption,~$\rank(\alpha A_1+(1-\alpha) A_2)=2$,
so~$\det( \alpha \bar A_1+(1-\alpha) \bar A_2) > 0$, and
 $\tr( \alpha \bar A_1+(1-\alpha) \bar A_2)=\alpha \bar t_1+(1-\alpha) \bar t_2 <0$.
Thus,~\eqref{eq:gahur} holds.

To prove~\eqref{eq:secpen},  let~$M:=\alpha \bar A_1+(1-\alpha)\bar A_2^{-1}$.
Seeking a contradiction, assume  that~$\det(M)=0$. Then clearly~$\alpha \not =1$.
Also,  there exists~$v \in \R^2 \setminus \{0\}$
such that~$\alpha \bar A_2 \bar A_1 v=-(1-\alpha) v$. This implies that~$\alpha \not = 0$,
so~$\bar A_2 \bar A_1$ has a real and negative eigenvalue. Since~$\det(\bar A_2 \bar A_1)=\bar d_1 \bar d_2>0$, $\bar A_2 \bar A_1$
has two   negative eigenvalues.
However,  a calculation shows that
$ \tr(\bar A_2 \bar A_1)$ is the sum of terms
in the form~$ a^1_{ij}  a^2_{kl}$
and thus~$\tr(\bar A_2 \bar A_1)\geq0$.
This contradicts the conclusion that~$\bar A_2 \bar A_1$ has two  negative
eigenvalues. Thus,~$\det(M) \not =0$ and therefore
\[
            \det(\alpha \bar A_1+(1-\alpha) \bar A_2^{-1})>0, \quad \text{ for all } \alpha \in[0,1].
\]

We now turn to consider~$\bar q:= \tr(\alpha \bar A_1+(1-\alpha) \bar A_2^{-1})$.
Since the matrices are~$2\times 2$,
$q= \alpha \bar t_1+(1-\alpha)
 \bar t_2 /\bar d_2
 $. Since~$\bar t_i<0$ and~$\bar d_2>0$, $\bar q<0$. This proves~\eqref{eq:secpen}.
 Thus, the reduced-order switched  system admits a CQLF and thus it is GUAS.
 By Remark~\ref{rem:wrm},
  the switched consensus system is UCC.~\end{IEEEproof}

\begin{Example}
%%%
Consider again the matrices in Example~\ref{exa:nshave3}.
Here it is straightforward   to
see that~$\rank(\co [A_1,A_2]) =2$.
In this case~\eqref{eq:sas3} yields
$
                    \bar A_1=\begin{bmatrix}   -5 &0 \\ 2 & -0.01  \end{bmatrix}$,
                    and $
                    \bar A_2=\begin{bmatrix}   -3 &0 \\ 1 & -0.1  \end{bmatrix}$.
%%%
These two matrices clearly admit a  CQLF. For example,
 for~$Y:= \begin{bmatrix}   100 &0 \\ 0 & 4  \end{bmatrix}$, we have
 $Q_1:=-(Y \bar A_1+\bar A_1' Y)= \begin{bmatrix}   1000 &-8 \\ -8 & 0.08  \end{bmatrix}>0$,
 and~$Q_2:= -(Y \bar A_2+\bar A_2' Y)= \begin{bmatrix}   600 &-4 \\ -4 & 0.8  \end{bmatrix}>0$.
We note in passing that combining  this with Remark~\ref{rem:wrm}
can be used to obtain an explicit exponential upper bound on the rate of convergence to consensus
for arbitrary switching laws.~\qed
\end{Example}

\subsubsection{Nice optimality}
%%%%%%%%%%%%%%%%%%%%%%%%%%%%%%%%%%%%%%%%

One may intuitively  expect that \emph{every} optimal control will be
``nice'' or ``regular'' in some sense. This expectation is wrong.
Indeed, we already saw in Example~\ref{exa:nshave2} that there
are cases where~\emph{every} control~$u \in \U$ is optimal.
A more reasonable expectation (at least in some cases) is that there
always exists at least one optimal control that is ``nice''.
This kind of \emph{nice-optimality} results
are important because they imply that the search for an optimal
control may be limited to a subset of ``nice'' controls that may be
much smaller than~$\U$.
A classic example is the \emph{bang-bang theorem} stating
that for linear control systems there always exists an optimal control
that is piecewise-constant and bang-bang
(see, e.g.~\cite{suss-levinson}).

We introduce some notation for scalar controls.  Given two  controls~$u_1 :[0,T_1] \to
[0,1]$  and~$u_2 :[0,T_2] \to [0,1]$,  let~$u_2*u_1$   denote their time-concatenation, that is, \[
    (u_2*u_1)(t):=\begin{cases}
        u_1(t), &  t \in [0,T_1), \\
        u_2(t-T_1), &     t \in [T_1, T_1+T_2].
\end{cases} \]
The corresponding trajectory~$x:[0,T_1+T_2] \to \R^n$
 is obtained by
first following~$u_1$ and then~$u_2$. For~$\U_1,\U_2 \subseteq \U$, let~$\U_2 * \U_1$   denote the set of all concatenations~$u_2*u_1$ where, for~$i=1,2$, either~$u_i \in \U_i$ or
$u_i$ is trivial (that is, the domain of~$u_i$ includes a single point). Hence,~$\U_2 * \U_1$ essentially contains both~$\U_1$ and~$\U_2$ themselves. For example, if~$\B_k \subset \U$
denotes the set of piecewise constant bang-bang controls with no more than~$k$ discontinuities, then~$(\B_1 * \B_2) = \B_4$ (as  the concatenation may introduce  an additional
discontinuity).

Consider a  bang-bang control~$u:\R_+ \to [0,1]$ with switching times~$ T_1<T_2<T_3<\dots$, that is, $u(t)=v$ for~$t \in [0,T_1)$, $u(t)=1-v$ for~$t \in [T_1,T_2)$, and so on where~$v \in \{0,1\}$.
Denote~$T_{ij}:=T_i-T_j$. We say that $u$ is \emph{periodic after  three  switches} if~$T_{21}=T_{43}=T_{65}=\dots$ and~$T_{32}=T_{54}=T_{76} = \dots$. Let~$\PBB \subset \U$ denote the
set of such controls, and let~$\PC_k \subset \U $ denote the set of piecewise constant functions  with no more than~$k$ discontinuities. Let \[ \W:= (\B_0* \PBB  ) \cup (\B_0 *  \PC_2
), \] i.e. the union of: (1)~controls that are a concatenation of
  a control that is periodic after three  switches and a bang arc; and (2)~controls that are
a concatenation of a piecewise constant control with no  more than two discontinuities  and a bang arc.

 We can now state our second main result in this subsection.
%%%
\begin{Theorem} \label{thm:main}
Suppose that~$n=3$ and~$r=2$. Fix arbitrary~$x_0 \in \R^3$ and~$T\geq 0$. Consider Problem~\ref{prob:bil}. There exists an optimal
control~$u^*=\begin{bmatrix} u^*_1 & 1- u^*_1
\end{bmatrix}'$ satisfying
\be
        u^*_1 \in    \W.
\ee
\end{Theorem}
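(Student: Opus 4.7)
My first move is to use the dimension reduction in Proposition~\ref{prop:redz}. Since $n=3$, the reduced system~\eqref{eq:zdyn} is a \emph{two-dimensional} bilinear control system
\[
\dot z = \bigl(u\bar A_1+(1-u)\bar A_2\bigr)z,\qquad u\in[0,1],
\]
with the explicit matrices from Example~\ref{ex:takes}, and by Remark~\ref{rem:wrm} the original optimization $\min_{u\in\U}V(x(T,u))$ is equivalent to $\min_{u\in\U}\|z(T,u)\|_M^2$ with $M$ the positive-definite matrix in~\eqref{eq:m2}. Writing $u_1=u$, the control enters scalarly, so by Corollary~\ref{thm:mp2} applied to the reduced system (equivalently, Proposition~\ref{prop:mpred} with $r=2$), every optimal control is bang-bang determined by the sign of the scalar switching function $\bar m(t):=\mu'(t)(\bar A_1-\bar A_2)z^*(t)$, possibly with singular intervals on which $\bar m\equiv 0$.

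The next step is to exploit the low dimension to classify the possible sign patterns of $\bar m$. Set $\bar B:=\bar A_1-\bar A_2$. A direct computation using~\eqref{eq:zdyn} and~\eqref{eq:lamred} yields $\dot{\bar m}=\mu'[\bar A_2,\bar B]z^*$, which depends on the fixed matrix $[\bar A_2,\bar B]$ and \emph{not} on $u^*$. Differentiating once more gives $\ddot{\bar m}$ as an affine function of $u^*$; in the $2\times 2$ setting the Cayley--Hamilton identity lets us reduce $[\bar A_2,[\bar A_2,\bar B]]$ and $[\bar B,[\bar A_2,\bar B]]$ to $\R$-linear combinations of $\bar B$ and $[\bar A_2,\bar B]$, so $\bar m$ satisfies a second-order linear equation whose coefficients are piecewise constant in the sign of $\bar m$. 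Combined with the fact that the $4$-dimensional Hamiltonian flow for the reduced problem admits two independent first integrals ($H$ itself, and the scaling symmetry $(\mu,z)\mapsto(c\mu,c^{-1}z)$), this pins down the qualitative behaviour of $\bar m$ in the plane: any sign pattern is either eventually constant, eventually alternating with a common period (a chattering pattern), or it becomes singular.

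Using this classification I would argue that, after possibly throwing away an initial segment, $u^*$ either has at most two switches (giving the $\PC_2$ branch) or becomes periodic after three switches (giving the $\PBB$ branch). The final bang arc is obtained from the terminal condition $\mu(T)=R(S^{-1})'PS^{-1}R'z^*(T)$: using the explicit $M>0$, the transversality condition forces $\bar m$ to have a definite sign on an interval $(T-\varepsilon,T]$, so the last piece of the control is a single bang, which accounts for the $\B_0*$ prefix in $\W$. Possible singular arcs have to be shown compatible with this structure---in~$2$D a singular interval forces $\mu$ to be an eigenvector of $\bar B$ and, propagated backward by the adjoint equation, can be replaced by an equivalent bang arc without changing the cost, so nothing is lost by restricting to $\W$.

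The main obstacle is the second paragraph: rigorously proving that the two-dimensional switching function can only produce the two qualitative patterns claimed above, and in particular that an infinite accumulation of switches forces exact periodicity after three switches. This is where I expect to lean most heavily on the explicit $2\times 2$ structure---either by directly integrating the reduced Hamiltonian system in polar coordinates on $\R^2\times\R^2$, or by invoking the classification results for planar switched linear systems cited in the paragraph preceding the theorem. Handling the singular case and verifying that the terminal bang arc has positive length in all degenerate configurations (e.g., when $z^*(T)=0$ or $\bar B z^*(T)=0$) are the subsidiary technicalities that will require care.
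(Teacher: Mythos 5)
Your first step coincides with the paper's: reduce to the planar bilinear system~\eqref{eq:zdyn} via Proposition~\ref{prop:redz} and note, by Remark~\ref{rem:wrm}, that optimality is preserved. After that, however, the paper does not analyze the switching function at all: it simply invokes the nice-reachability theorem of~\cite{mar-bra-full}, which states that for a planar bilinear control system with $r=2$ one has $R(T,\U,x_0)=R(T,\W,x_0)$ for all $x_0$ and $T$, and concludes immediately. Everything in your second and third paragraphs is an attempt to re-derive that cited theorem from scratch, and this is where the gap lies. The crucial claim --- that the sign pattern of $\bar m$ is either eventually constant, eventually singular, or forces the control to be \emph{exactly periodic after three switches} --- is precisely the content of the result you would need to prove, and you explicitly defer it (``the main obstacle''). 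A second-order ODE for $\bar m$ with coefficients depending on $\sign(\bar m)$, plus the two first integrals $H$ and $\mu'z$, does not by itself yield that classification; the argument in~\cite{mar-bra-full} requires a genuinely careful case analysis (in the spirit of Sussmann's work on planar bang-bang extremals), and without it the theorem is not proved.

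Two further points would fail as stated. First, your treatment of singular arcs is wrong: a singular interval cannot in general be ``replaced by an equivalent bang arc without changing the cost.'' The paper's own Example~\ref{exa:singu} exhibits a planar reduced system for which the constant control $u\equiv 1/2$ strictly beats every bang-bang control; the reason $\W$ still suffices is that its $\B_0*\PC_2$ branch explicitly admits piecewise-constant controls taking values in the \emph{interior} of $[0,1]$, so singular arcs are represented within $\W$ rather than eliminated. Second, the claim that transversality forces $\bar m$ to have a definite sign on $(T-\varepsilon,T]$, hence a terminal bang arc of positive length, is not justified when $\bar m(T)=0$ (e.g.\ when $\mu(T)$ annihilates $\bar Bz^*(T)$); in the paper this issue never arises because the $\B_0*$ prefix in the definition of $\W$ allows the bang piece to be trivial, i.e.\ of zero length. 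In short: your route is a plausible plan for reproving the key lemma of~\cite{mar-bra-full}, but as written it leaves the decisive step open and mishandles the singular case, whereas the paper's proof consists of the dimension reduction followed by a citation.
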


%%%%%%%%%%%%%%%%%%%%%%%%%%%%%%%%%%%%%%%%%%%%%
\begin{IEEEproof}
 %%%%
 When~$n=3$ the reduced-order $z$-system is a planar bilinear
 control system.
 It was shown in~\cite{mar-bra-full} that the reachable set of a {planar} bilinear control system   with~$r=2$ satisfies\footnote{This is a ``nice-reachability-type'' result. See~\cite{sussmann-bang-bang-siam-1979} for a powerful approach for deriving this type of result.}
\be \label{eq:rspla}
        R(T, \U,  x_0)= R(T , \W, x_0),\;\;
 \text{for all }   x_0 \in \R^2 \text{ and all } T \geq 0.
\ee
This implies of course that we can find an optimal control~$u^*$ for the  the $z$-system satisfying~$u^* \in \W$. By Remark~\ref{rem:wrm}, this control is also an optimal
control for the original bilinear control system.~\end{IEEEproof}

Recall that a set~$C \subseteq \R^n$ is  called a \emph{convex cone}  if~$ p,  q \in C$ implies that~$k_1   p +k_2   q\in C$ for all~$k_1,k_2 \geq 0$. The cone is said to be:
 \emph{solid} if its interior  is non-empty;
 \emph{pointed} if~$C \cap (-C) =\{  0\}$;
 \emph{proper} if it is both solid and pointed.
It was shown in~\cite{mar-bra-full} that if there exists a  proper cone~$C\subset\R^2$ that is an
 invariant set of the planar bilinear dynamics
  then~\eqref{eq:rspla}
can be strengthened to
 \[
        R(T, \U,  x_0)= R(T , \V, x_0),\;\;
 \text{for all }   x_0 \in \R^2 \text{ and all } T \geq 0,
\] where~$\V:=\B_3  \cup (\B_0*\PC_2)$. Since the~$A_i$s are Metzler, the   BCCS
     admits the proper cone~$\R_+^3$ as an invariant set. Thus, $ \{S   x: x \in
\R^3_+\}$ is an invariant  set of  the~$y$ system, and \[
                \left\{    \begin{bmatrix} 0 & 1 & 0 \\ 0 & 0& 1 \end{bmatrix} S  x : x \in \R^3_+ \right\} \subseteq \R^2
\]
is an invariant  set of  the~$z$ system. However, this set is not a proper cone in~$\R^2$, as it is not pointed.
 \subsection{Worst-case analysis}
%%%%%%%%%%%%%%%%%%%%%%%%%%%%%
We convert Problem~\ref{prob:maxv} into the following optimal control problem.
\begin{Problem}\label{prob:worstbl}
        Given the bilinear consensus system~\eqref{eq:bil} and a final time~$T>0$,
        find a control~$v^*\in \U$ that
        \emph{maximizes}~$V(x(T))$.
\end{Problem}
Intuitively, $v^*$  maximizes the distance to consensus, so it is a
\emph{worst-case} control.

Since \begin{align}\label{eq:maxV}
%%%%%
                        \max_{v\in \U} V(x(T))&= \min_{v\in \U} (-V(x(T)))\\
                                              &= \min_{v\in \U} x'(T)(-P)x(T),\nonumber
\end{align}
 all the results about the optimal control
 derived above hold once~$P$ is replaced with~$-P$. For example, the MP in
  Thm.~\ref{thm:mp} becomes a necessary condition for the optimality of~$v^*$
once~\eqref{eq:lam} is replaced by
\begin{align}\label{eq:oppadj}
%%%%%
                    \dot{\lambda}(t)=-\left(\sum_{i=1}^r{u_i^*A_i}\right)' \lambda(t),\quad
\lambda(T)=(- P) x^*(T).
\end{align}

\begin{Example}\label{exa:wrtcs}
%%%%%%%
Consider again the matrices~$A_1,A_2$  in Example~\ref{exa:nshave3} with $T=1 $ and~$x_0=\begin{bmatrix} 1& 2&1  \end{bmatrix}'$.
Using a simple numerical algorithm
 for determining the \emph{worst-case} control yields
 \be\label{eq:optwrtcase}
v^*(t)= \begin{cases}  1 ,& t \in [0,\tau), \\
                       0, & t \in (\tau, 1],
\end{cases} \ee
where $ \tau \approx 0.346429 $.    
 The corresponding trajectory  is
\begin{align*} x^* (T)&=\exp(A_1 (T-\tau)) \exp(A_2 \tau) x_0\\
   &= \begin{bmatrix} 
    1.635003 &
   1.648475&
   1.034004  \end{bmatrix}'
\end{align*}
%%%%
and~$V(x^*(T))=  0.246319 $.   
 On the other hand, if we use only one of the subsystems then we get either
 \begin{align*}
 \exp(A_1  T )   x_0&=
  \begin{bmatrix}
   1.595957 &
   1.602695&
   1.006758
\end{bmatrix}',\\
%%%
V(\exp(A_1  T )   x_0)&= 0.234114, \end{align*} or
 \begin{align*}
 \exp(A_2  T )   x_0&=
  \begin{bmatrix}  
    1.633475 &
   1.683262 &
   1.073270
\end{bmatrix}',\\ V(\exp(A_2T)x_0)&= 0.229467.
\end{align*}
Thus, in this case the switching indeed strictly slows  down the convergence to consensus at the final time~$T$. Given~$v^*$,
it is straightforward to compute the adjoint in~\eqref{eq:oppadj} and the switching function~$m(t)$ (see Fig.~\ref{fig:wrstc}). It may be seen that~$m(t) <0$ for~$t \in [0,\tau)$,
and~$m(t)>0$ for~$t\in (\tau,T]$. Thus,~$u^*$ indeed satisfies~\eqref{eq:ustar}.~\qed
%%%
%%%
\end{Example}

\begin{figure}[t]
  \begin{center}
  \includegraphics[height=6.5cm]{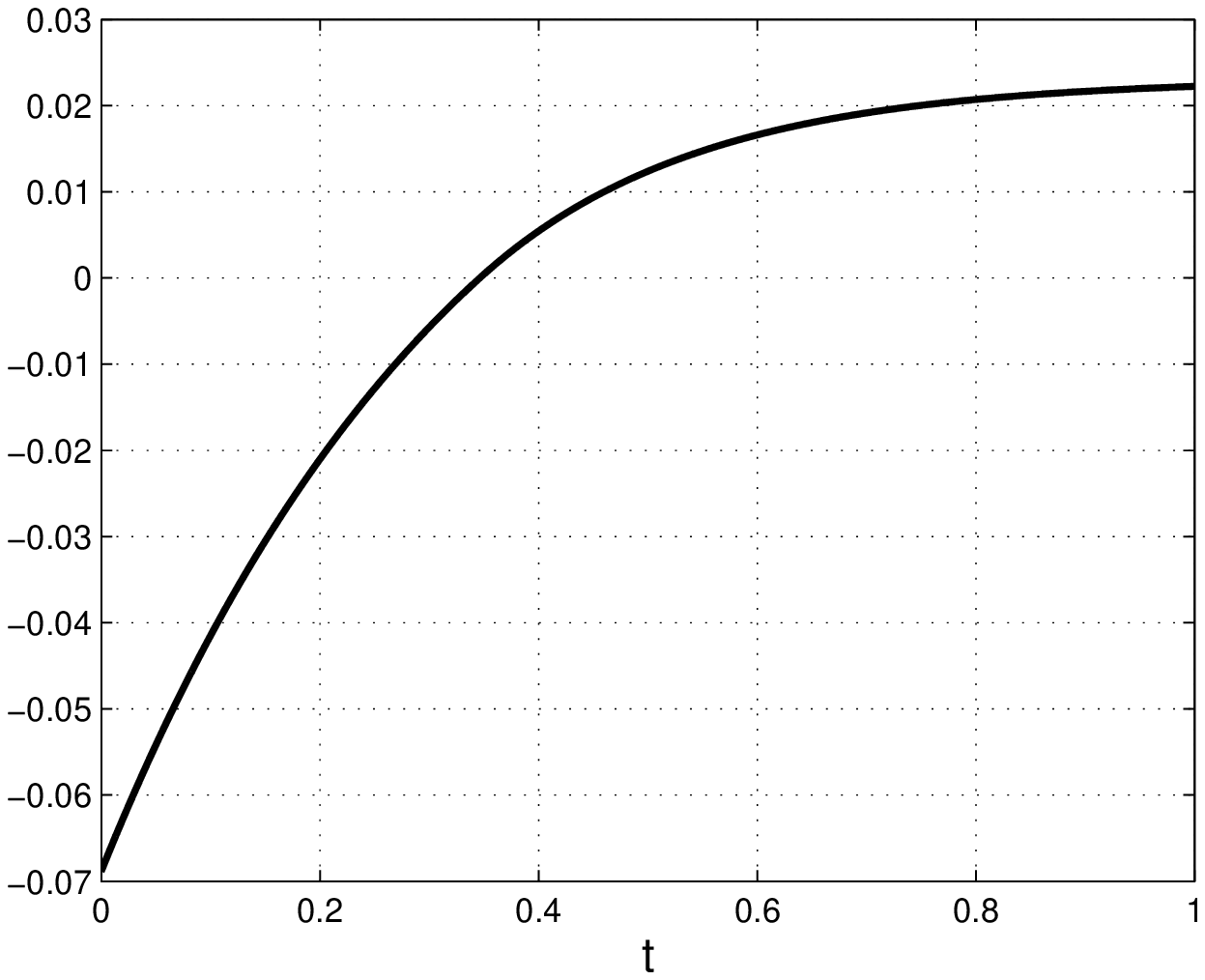}
  \caption{Switching function~$m(t)$ in Example~\ref{exa:wrtcs}.  }\label{fig:wrstc}
  \end{center}
\end{figure}

%%%
%%%

In the reduced-order system,
the maximization problem~\eqref{eq:maxV} becomes
\be\label{eq:maxy}
            \max_{u\in\U} \|z(T,u)\|_M ^2,
\ee
where~$z$ satisfies~\eqref{eq:zdyn}.
Recall that this is an~$(n-1)$-dimensional problem.
Furthermore, this problem is also closely related to the~GUAS problem.
Indeed, let~$v^*\in \U$ be a solution to~\eqref{eq:maxy}.
Then~$v^*$ ``pushes'' the state~$z$ as far as possible from the origin
(for the given final time~$T$, initial condition~$z_0=RSx_0$, and metric~$ \|\cdot\|_M$).
Since GUAS means convergence to the origin for any control,
$v^*$ may be interpreted as the ``most destabilizing'' control
(see~\cite{blondel_con_guas,morse03} for closely related ideas
in the context of discrete-time consensus algorithms).
In the remainder of this section, we explore some
of the implications of this connection.

We already know that when $n=2$ there always exists
an optimal control~$u^*$  for Problem~\ref{prob:bil}
that is bang-bang with no switches. The same holds for
Problem~\ref{prob:worstbl}.
The next example shows that for~$n=3$
this is no longer true.
\begin{Example}\label{exa:singu}
Consider Problem~\ref{prob:worstbl}
with $n=3$,~$r=2$, $T=1$,
\[
A_1=\begin{bmatrix}
   -1&1 &0 \\0 & -1 & 1\\ 0 & 0& 0  \end{bmatrix}
,\quad
%%%
A_2=\begin{bmatrix}
   0 &  0  & 0\\
   1  & -1 & 0\\
   0  & 1 & -1
  \end{bmatrix},
\]
and~$x_0=\begin{bmatrix}   2  & 1 & 0 \end{bmatrix}'$.
The corresponding BCCS is given by~$\dot x=(A+B u)x$, with~$u(t) \in [0,1]$,
$A:=A_1$ and~$B:=A_2-A_1$.
We claim that     no bang-bang control   is optimal.
 To prove this, assume that~$v^*$
 is an optimal control that is bang-bang.
The reduced-order system is~$\dot z=(\bar A+\bar B u)z$, with
$\bar{A}=\begin{bmatrix} -1 &1 \\ 0 & -1  \end{bmatrix}$,
$\bar{B}=\begin{bmatrix} 0 &-1 \\ 1 & 0  \end{bmatrix}$,
$z_0 =\begin{bmatrix} 1 & 1 \end{bmatrix}'$.
We know that~$v^*$
maximizes~$|z(T,u)|_M^2$, with~$M$ given in~\eqref{eq:m2},
i.e.,~$|z(1,v^*)|_M^2=\max_{u \in \U} |z(1,u)|_M^2$.
The reduced-order  system is a positive bilinear control system, as both~$\bar{A}$
and~$\bar{A}+\bar{B}$ are Metzler matrices. Thus,~$\R^2_+$
is an invariant cone of the dynamics and by \cite[Thm.~2]{mar-bra-full},
$v^*$ has no more than two switches. In other words, the corresponding trajectory
satisfies either
\[
            z^*(1)=\exp(\bar{A} (1-t_1-t_2))   \exp((\bar{A}+\bar{B}) t_2) \exp(\bar{A} t_1) z_0,
\]
or
\[
            z^*(1)=\exp((\bar{A}+\bar{B}) (1-t_1-t_2))   \exp( \bar{A} t_2) \exp((\bar{A}+\bar{B}) t_1) z_0,
\]
where
\be\label{eq:tconst}
t_1,t_2 \geq 0, \quad t_1+t_2\leq1.
\ee
Since~$\bar{A},\bar{A}+\bar{B} \in \R^{2 \times 2}$ and both are triangular, it is
straightforward to show that both possible forms yield
\begin{align*}
   |z^*&(1)|^2_M=
(2(7 + t_2(4 + (4 - 5 t_1)t_1 - 4t_2   \\
 &+ t_1(-3 + (t_1-1  )t_1^2)t_2 +
     (t_1^2-1)(1 + 2t_1)t_2^2   \\
     &+ (1 + t_1)^2 t_2^3)))/(3\exp(2)) .
     \end{align*}
 Maximizing this subject to~\eqref{eq:tconst} yields
$t_1^*\approx 0.2570$, $t_2^*\approx 0.4615  $, and
\begin{align}\label{eq:noz}
      |z^*(1)|^2_M \approx 0.72918.
\end{align}
On the other hand,  the control~$u(t)\equiv 1/2$ yields
\begin{align*}
            z(1)&=\exp(  \bar{A}+\bar{B}/2) z_0\\
            &=\exp(-1/2)\begin{bmatrix} 1 & 1 \end{bmatrix}',
\end{align*}
so
$
            |z(1)|^2_M=2  \exp(-1) \approx 0.73576.
$
Comparing this to~\eqref{eq:noz} implies that~$v^*$ is not optimal, so
there is no optimal control that is bang bang.
In fact, the  control~$u(t)\equiv 1/2$ is an optimal control.
To explain this,
 note that the eigenvalues of the matrices~$\bar A_1,\bar A_2$ are~$\{ -1,-1\}$, so
  the   speed of convergence
to consensus obtained by using each matrix is~$\exp(-t)$.
However,
the eigenvalues of the matrix~$(\bar A_1+\bar A_2)/2 $
(that corresponds to~$u(t)\equiv 1/2$)
are~$\{ -1/2,-3/2\}$, where~$-1/2$
corresponds to the eigenvector~$z_0 =\begin{bmatrix} 1 & 1 \end{bmatrix}'$.
Thus, for~$z(0)=z_0$, the rate
 of convergence to consensus is~$\exp(-t/2)$, which is of course
 slower  than~$\exp(-t)$ (recall that we are considering the problem of maximizing~$V(x(T,u))$).~\qed
\end{Example}

In general, it is possible of course that a switched system, composed of two asymptotically stable subsystems, will have a diverging trajectory for some switching law. For the reduced-order problem derived from the consensus problem this is not the case,
as every trajectory of~\eqref{eq:zdyn} is bounded.
This follows from the fact~\cite{Mor_consensus}
that~$\tilde{V}(x):=\max_{i \in \{1,\dots,n\} } x_i- \min_{ i \in \{1,\dots,n\} } x_i $
is non-increasing along the solution of every linear consensus system
(see also~\cite{Chazelle:2012:NAI:2380656.2380679} for some related
considerations).
Letting~$Q\in \R^{ n\times (n-1)  }$
denote the matrix~$S^{-1}$ with its first column deleted,
and using~$x=S^{-1}y$,
 and the fact that the first column of~$S^{-1}$ is~$c 1_n$,  $c\in\R$, yields
 \begin{align*}
                    x&= c {y_1} 1_n  +Q
                    \begin{bmatrix}y_2& \dots &y_n \end{bmatrix}'\\
                      &= c {y_1} 1_n + Q z.
\end{align*}
%%%%
Thus,~$\tilde V(x(t))\equiv \tilde W(z(t))$,
 where \[
                \tilde W(z): = \max_{i \in \{1,\dots,n\} } (Qz)_i- \min_{i \in \{1,\dots,n\} } (Qz)_i .
\]
This implies that~$\tilde W(z(t))$ remains bounded along solutions of the reduced-order
system, and since the columns of~$Q$ are linearly independent, this
implies that every trajectory is bounded.
%%%%

 \begin{Example}
 Consider again the system in Example~\ref{exa:wrtcs}. Recall that the worst case control is given in~\eqref{eq:optwrtcase}. Let~$z^*$ denote the
corresponding trajectory of the reduced-order system. The function~$\tilde  W(z^*(t))$ is depicted
 in Fig.~\ref{fig:wrstcW}. It may be seen that~$\tilde W(z^*(t))$
 remains bounded (in fact, it is strictly decreasing).
 Note the change in the dynamics at the switching point~$ \tau \approx 0.35$.~\qed
\end{Example}

\begin{figure}
  \begin{center}
  \includegraphics[height=6.5cm]{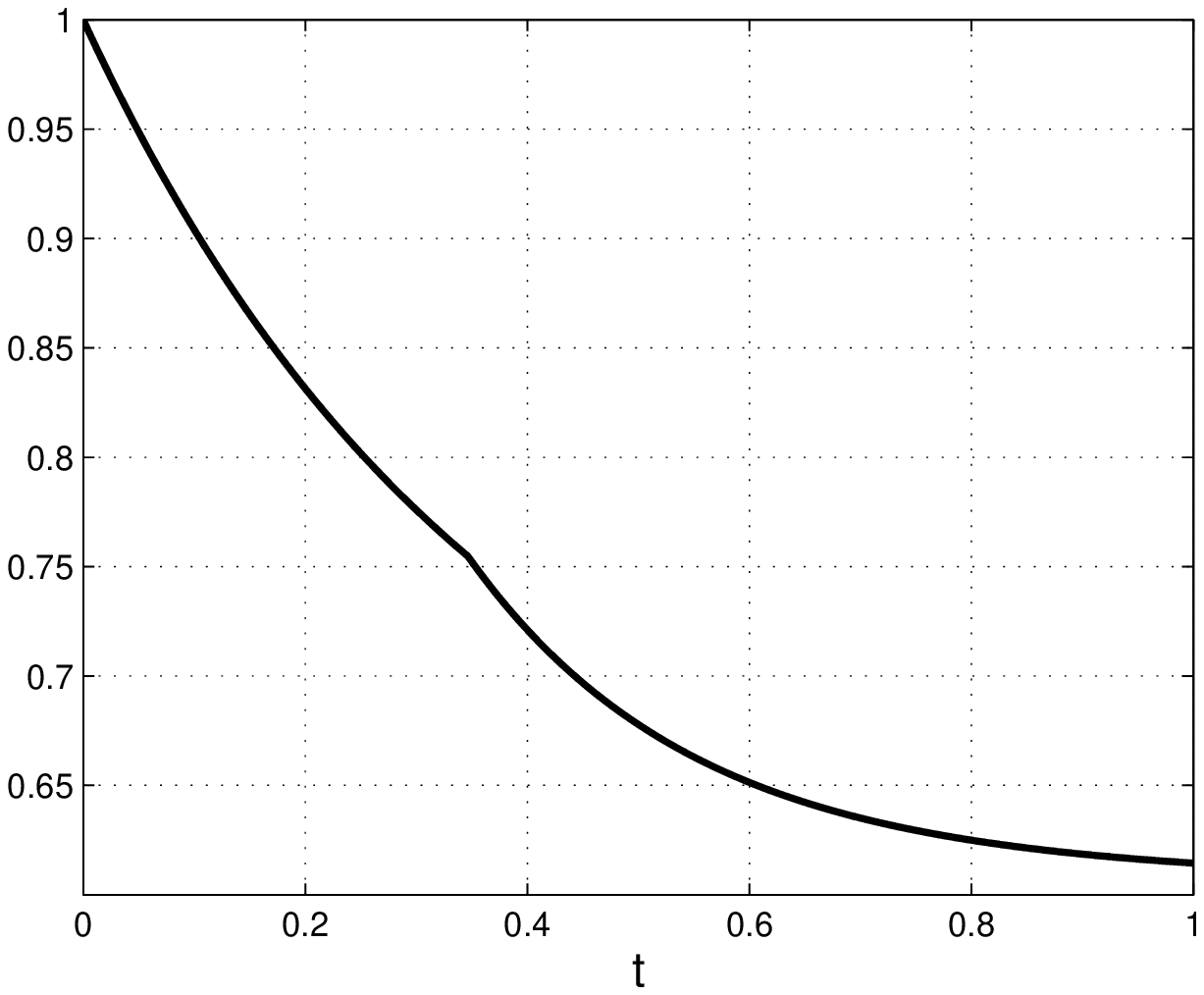}
  \caption{Function~$\tilde  W(z^*(t))$ in Example~\ref{exa:wrtcs}.  }\label{fig:wrstcW}
  \end{center}
\end{figure}

\section{Discussion}
%%%%%%%%%%%%%%%%%%%%%%

Consensus algorithms are essential building blocks in distributed systems. In these systems, the possibility to exchange local information between the agents may be time-varying. A
standard model for  this is
  a switched system, switching between several subsystems, each implementing a consensus algorithm with a different connectivity pattern.

In the continuous-time linear case, each subsystem is in the form~$\dot{x}=A_i x$, where~$A_i$ is a Metzler matrix with zero row sums. The switching law may have a strong effect on the
convergence to consensus and a natural problem is: find a  best (or worst) possible switching law.

We consider this question in the framework of optimal control theory. This is motivated by the variational approach used
 to analyze  the GUAS problem in switched systems.
In particular, in the case of positive linear switched systems~(PLSSs)
 each subsystem is in the
form~$\dot{x}=A_i x$, with~$A_i$   a Metzler matrix~(see e.g.~\cite{gurvits-shorten-mason07,lior}). Recently, the variational approach was extended to address the GUAS problem  for
 PLSSs~\cite{lior_SIAM}. Here the
optimality criterion is maximizing the spectral
 radius of the transition matrix~\cite{lior_SIAM}.

One  advantage of this variational approach is that
it allows bringing to bear  powerful techniques from optimal and geometric control theory. We apply
 the PMP to
obtain  a necessary condition for optimality.
The special structure of the consensus problem allows a dimensionality reduction.
This
 shows that a switched consensus system is UCC if and only if a reduced order linear switched system
is GUAS. One application of this is that
 computational complexity results for the GUAS problem (see, e.g.~\cite{Vlassis20149,Jungers})
immediately imply similar results for the UCC problem.

The variational  approach  leads to a complete solution of the problem when the dimension is~$n=2$. For
the case~$n=3$, and~$r=2$, we show that there  always  exists an optimal control that is ``nice''.
We also show that the switched consensus system is UCC
if and only if the digraph corresponding
to any matrix in the  convex hull of the two subsystems has a rooted-out branching.

The variational approach has also been used to analyze
 the GUAS problem for  nonlinear switched systems~\cite{holcman,mar-lib,yoav}, and for
  discrete-time switched systems~\cite{monovich1,tal2}. Extensions of the
approach described here to nonlinear consensus algorithms~\cite{Hui20082375},
 and to discrete-time consensus problems~\cite{garin_schenato2010} may   thus be possible.

 Finally, note
that combining the MP with efficient numerical algorithms for solving optimal control problems
 may lead to  explicit  numerical  lower and upper
bounds for the convergence rate to consensus in many real-world problems.
Any algorithm for determining the switching between the subsystems, including those  that are based on local information only, can be rated by comparing them to these bounds.

\section*{Acknowledgements}
%%%%%%%%%%%%%%%%%%%%%%
We are grateful to Daniel Liberzon, Dan Zelazo,  and Moshe Idan for helpful comments.
We thank the anonymous reviewers for their detailed, knowledgeable,  and
 helpful comments.

 \bibliographystyle{IEEEtran}
 \bibliography{orel_sync}

\end{document}